\newtheorem{thm}{Theorem}[section]
\newtheorem{lem}{Lemma}[section]
\newtheorem{cor}{Corollary}[section]
\newtheorem{pro}{Proposition}[section]
\theoremstyle{definition}
\renewenvironment{proof}[1][\proofname]{%
  \par\pushQED{\qed}%
  \normalfont \topsep6\p@\@plus6\p@ \trivlist
  \item[\hskip\labelsep \textbf{#1}]%
}{%
  \popQED\endtrivlist\@endpefalse
}
\allowdisplaybreaks\allowdisplaybreaks[4]
\renewcommand\proofname{\bf Proof}
\renewcommand\subsection{\@startsection{subsection}{2}{\z@}%
                                      {-5ex \@plus -1ex \@minus -.2ex}
                                      {2.5ex \@plus .2ex}
                                      {\normalfont\itshape}}
\tikzstyle{none}=[inner sep=0mm]
\tikzstyle{bluenode}=[fill=blue, draw=black, shape=circle, minimum
\tikzstyle{whitenode}=[fill={rgb,255: red,245; green,245;
\tikzstyle{yellownode}=[fill=yellow, draw=black, shape=circle, minimum size=0cm, inner sep=1pt]
\tikzstyle{pinknode}=[fill={rgb,255: red,255; green,191; blue,191}, draw=black, shape=circle, minimum size=0cm, inner sep=1pt]
\tikzstyle{blacknode}=[fill=black, draw=black, shape=circle, minimum size=0.2cm, inner sep=0pt]
\tikzstyle{rednode}=[fill={rgb,255: red,244; green,0; blue,0}, draw=black,
\tikzstyle{square}=[draw=black, shape=rectangle, minimum
\tikzstyle{dot}=[fill=black, draw=black, shape=circle, minimum size=0.04cm, inner sep=0pt]
\tikzstyle{blackedge}=[line width=1.2pt, black]
\tikzstyle{blackedge_thick}=[-, draw=black, thick,
\tikzstyle{rededge}=[-, draw=red]
\tikzstyle{rededge_thick}=[-, line width=0.45mm, draw=red]
\tikzstyle{blackedge_opacity}=[-, -, draw={rgb,255: red,91; green,87; blue,84},
\tikzstyle{balck_dash}=[-, dash pattern=on 0.2mm off 0.2mm]
\tikzstyle{blue_thick}=[-, line width=0.5mm, draw=blue]
\tikzstyle{blueedge}=[-, line width=1pt, blue, opacity=0.7]
\tikzstyle{shadow_silver2}=[-, draw=black, fill={rgb,255: red,186; green,186;
\tikzstyle{shadow_}=[-, fill={rgb,255: red,186; green,186; blue,186}, draw=none]
\tikzstyle{dashpure_thick}=[-, dashed, line width=0.5mm, draw={rgb,255:
\tikzstyle{pureedge}=[-, draw={rgb,255: red,128; green,0; blue,128},line width=0.2mm]
\tikzstyle{red_dash}=[-, draw=red, dash pattern=on 0.2mm off 0.2mm,line width=0.6mm]
\let\oldbibliography\thebibliography
\renewcommand{\thebibliography}[1]{%
  \oldbibliography{#1}%
  \small
  \setlength{\itemsep}{-3pt}%
  \setlength{\baselineskip}{10pt}
  \setlength{\lineskiplimit}{-\maxdimen}
}
\begin{document}
\title{\bf \Large The sharp upper bounds on the maximum degree and vertex-connectivity of claw-free 1-planar graphs\footnote{The work was supported by the National
Natural Science Foundation of China (Grant No. 12271157, 12371346) and the Postdoctoral Science Foundation of China (Grant  No. 2024M760867).}
\author{ 
{  Licheng Zhang$^{a}$} \thanks{lczhangmath@163.com}, {  Zhangdong Ouyang $^{b}$ \thanks{Corresponding author: oymath@163.com}}, {  Yuanqiu Huang $^{c}$}\thanks{hyqq@hunnu.edu.cn}\\
\small $^{a}$ School of Mathematics, Hunan University, 
\small Changsha, 410082, China\\
\small $^{b}$College of Mathematics and Statistics, Hunan First Normal University, 
\small Changsha, 410205, China \\
\small $^{c}$ College of Mathematics and Statistics, Hunan Normal University, 
\small Changsha, 410081, China 
}}

\date{}
\maketitle
\begin{center}
  {\large\bfseries Abstract}
\end{center}
This paper studies structural properties of claw-free 1-planar graphs, extending earlier work on claw-free planar graphs initiated by Plummer. We establish upper bounds on the maximum degree and the vertex-connectivity of claw-free 1-planar graphs. First, every claw-free 1-planar graph has maximum degree at most 10. In addition, if the graph is 6-connected, then its maximum degree is at most 8. Finally, the vertex-connectivity of any claw-free 1-planar graph is at most 6, and thus every 7-connected 1-planar graph  contains an induced claw. All of these bounds are sharp.

\noindent \textbf{Keywords:}
1-planar graph,   induced claw, maximum degree, connectivity  

\noindent  \textbf{MSC:} 05C10; 05C40; 05C62; 05C69

%
\section{Introduction}\label{se-1}

We consider finite simple graphs and use standard terminology and notation from  \cite{Bondy}.  Let $G$ be a graph with
{\it a vertex set} $V(G)$ and {\it an edge set} $E(G)$. The \emph{order} and \emph{size}  of $G$ are $|V(G)|$ and $|E(G)|$, respectively. For brevity, we write $n(G)$ instead of $|V(G)|$ and $e(G)$ instead of $|E(G)|$.
The degree of a vertex $v$ in $G$ is denoted by $d_G(v)$. The maximum degree of $G$ is denoted by  $\Delta(G)$.  The vertex-connectivity (for short, connectivity) of   $G$ is denoted by $\kappa(G)$. 
A subgraph $H$ of a graph $G$ is called an \emph{induced subgraph} of $G$ if any edge in $G$ that joins a pair of vertices in $H$ is also in $H$. The complete bipartite graph $K_{1,3}$ is  called  a {\it claw}.  For a graph $H$, a graph $G$ is {\it $H$-free } if $G$ does not have an induced subgraph isomorphic to $H$. In particular, a graph $G$ is {\it claw-free} if $G$ does not have an induced subgraph isomorphic to a claw. 
A \emph{line graph} $L(G)$ of a graph $G$ is obtained by associating a vertex with each edge of $G$ and connecting two vertices with an edge if the corresponding edges of $G$ have a vertex in common.

Claw-free graphs have been extensively studied for a long time.  In particular, in 1970, Beineke \cite{Beineke} characterized line graphs using nine forbidden induced subgraphs, including the induced claw, leading to the initial study of claw-free graphs as a generalization of line graphs. 
Furthermore, the study of claw-free graphs has been driven by additional motivations, notably in topics such as matchings \cite{Plummer1994} and hamiltonicity \cite{Matthews–Sumner}. For further details, the reader is referred to the classic survey \cite{Faudree}. In this paper, we focus on properties of  claw-free  graphs with special drawings. A graph is {\it planar} if it can be drawn on the plane  such that no edges cross each other.  A \emph{maximal planar graph}  of order at least 3 is a graph that can be embedded in the plane such that every face of the graph is a triangle. The study of claw-free planar graphs was initiated by Plummer in 1989 \cite{Plummer}. Plummer \cite{Plummer} characterized claw-free maximal planar graphs, showing that each of these graphs belongs to an infinite family that can be described recursively. Later, Plummer  \cite{Plummer1994} showed that there is precisely one
3-connected claw-free planar graph of order even which is 2-extendable -- the icosahedron.  Shan, Liang and Kang  \cite{Shan2014} showed that  every claw-free planar graph, different from an odd cycle, is 2-clique-colorable. Notably,
Plummer in \cite{Plummer} showed that every 3-connected claw-free planar graph has maximum degree at most 6 and the bound is sharp. In fact, using Plummer's proof technique, the upper bound 6 still holds for all claw-free planar graphs (without the 3-connectivity condition). The upper bound of the maximum degree of claw-free planar graph is vital for the characterization of claw-free maximal planar graphs and has found broad applications in areas such as matching extendability \cite{Plummer1994}, total coloring \cite{Liang2022}, clique-perfectness \cite{Liang2016}, and clique-transversal numbers \cite{Shan2014}.
Moreover, Plummer  \cite{Plummer} showed that 3-connected claw-free planar graphs without separating triangle (i.e. a cycle of length 3) \footnote{A triangle of a planar graph is separating  if its removal separates the graph.} have maximum degree at most 5.  This directly implies that every 4-connected claw-free planar graph $G$ has $\Delta(G) \leq 5$. It is well-known that every planar graph has connectivity at most 5. Moreover, Corollary 2.2 in the same article \cite{Plummer} implicitly states that there exists exactly one 5-connected claw-free maximal planar graph, namely the Icosahedral graph. In 2014,  Shan, Liang and Kang~\cite{Shan2014} (Lemma~8) proved that if $G$ is a claw-free planar graph without 4-cliques, then $\Delta(G) \leq 5$, and for every vertex $v$ of degree 5, the subgraph induced by its closed neighborhood is a wheel of order 6. As a consequence, every 5-connected claw-free planar graph is maximal planar. Combining these results yields that every  claw-free planar graph has connectivity at most four  with the exception of the Icosahedral graph.

This paper studies structural properties of claw-free 1-planar graphs, extending earlier work on claw-free planar graphs initiated by Plummer.
A graph is  {\it $1$-planar}  if it can be drawn on the plane such that each edge is crossed at most once. A \emph{1-plane graph} (resp. \emph{plane graph}) is a fixed 1-planar drawing (resp. \emph{planar drawing}) of a 1-planar graph (resp. planar graph). 1-planar graphs were first studied by Ringel (1965) \cite{Ringel1965}.  Since then, 1-planar graphs have witnessed substantial progress; see \cite{kobourov2017} for a recent survey and the references therein. It is worth noting that the study of forbidden (induced) subgraphs has also received attention in  1-planar graphs. For example, Chen et al. \cite{Chen} proved that every triangle-free 1-planar graph $G$ has an acyclic edge coloring with $\Delta(G)+16$ colors. Recently, Bekos et al. \cite{Bekos} showed every triangle-free 1-planar graph of order $n \geq 4$  has size at most $3n-6$. However, to the best of our knowledge, the study of claw-free 1-planar graphs is entirely new. First, we  establish an upper bound on the maximum degree of  claw-free 1-planar graphs.

\begin{thm}\label{main0}
Let $G$ be  a $1$-planar graph. If $G$ is claw-free, then $\Delta(G)\le 10 $, and the bound is sharp.
\end{thm}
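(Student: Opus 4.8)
The plan is to fix a 1-plane drawing of a claw-free $1$-planar graph $G$ and argue locally around a vertex $v$ of maximum degree $\Delta = d_G(v)$. The key observation is the usual one for claw-free graphs: the neighborhood $N(v)$ cannot contain an independent set of size $3$, so the graph $G[N(v)]$ induced on the neighbors of $v$ has independence number at most $2$, which by complementation means $\overline{G[N(v)]}$ is triangle-free, hence (Ramsey/Turán-type bound) $G[N(v)]$ has at least $\binom{\Delta}{2} - \lfloor \Delta^2/4\rfloor$ edges; more usefully, $N(v)$ can be covered by two cliques of $G[N(v)]$. I would combine this with an edge-count restriction coming from $1$-planarity: the relevant local structure is that of $G$ restricted to $\{v\}\cup N(v)$ together with the edges among $N(v)$, drawn $1$-planarly, and one wants to show that if $\Delta\ge 11$ then no such drawing can make $N(v)$ into (at most) two cliques.

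The main technical device I expect to use is to look at the rotation of edges around $v$ in the $1$-plane drawing and to split the edges at $v$ into those that are uncrossed near $v$ and those that are crossed. Consecutive uncrossed neighbors in the rotation at $v$ bound a face-like region, and claw-freeness forces many of the cyclically consecutive pairs (and near-consecutive pairs) of neighbors to be adjacent in $G$. One then has to track how a crossing edge incident to $v$ can "skip over" a neighbor, and bound how many such skips are possible — each crossing edge at $v$ is crossed by exactly one other edge, and $1$-planarity limits the total interaction. Converting the cyclic-adjacency information and the crossing budget into the statement "$G[N(v)]$ is the union of at most two cliques forces $\Delta\le 10$" is the heart of the argument: one essentially shows that each of the two cliques, being drawn $1$-planarly inside a bounded region determined by $v$'s rotation, has size at most $5$ (a $1$-planar clique has at most $6$ vertices, and here one of the vertices plays the role of $v$ or is otherwise constrained), so $\Delta = |N(v)| \le 5 + 5 = 10$.

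I would organize the write-up as follows. First, record the claw-free consequence: $N(v)$ is covered by two cliques $C_1, C_2$ of $G$. Second, prove a lemma that a clique in a $1$-planar graph all of whose vertices are adjacent to a common vertex $v$ (so it sits inside $K_7$-like configurations) has size at most $5$; here I would invoke the known fact that $K_7$ is not $1$-planar (indeed $K_6$ is $1$-planar but $K_7$ is not) together with the extra structure that all $C_i$ vertices plus $v$ form a clique $K_{|C_i|+1}$, which must be $1$-planar, giving $|C_i|+1\le 6$. Third, assemble: $\Delta = |N(v)| \le |C_1| + |C_2| \le 5 + 5 = 10$. Finally, for sharpness, exhibit an explicit claw-free $1$-planar graph with a vertex of degree $10$ — e.g. take two copies of $K_6$ sharing the single vertex $v$ (this is claw-free since every neighbor of $v$ lies in a $K_5$ with $v$, and one checks the two $K_6$'s can be drawn $1$-planarly with $v$ on the outer boundary of each), and verify no induced claw appears (a claw centered at any vertex would need three pairwise non-adjacent neighbors, impossible inside a union of two cliques whose intersection structure is controlled). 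The main obstacle is the second step done carefully: it is not quite enough that $K_7\not\subseteq_{1\text{-planar}}$; one must handle the case where $C_1$ and $C_2$ overlap, or where a $C_i$ vertex is joined to $v$ via a crossed edge, so the "clique plus $v$" need not literally be drawn as a $1$-planar $K_{|C_i|+1}$ inside a disc — I would need to argue that the $1$-planarity constraint still caps each $|C_i|$ at $5$, possibly by a direct Euler-formula edge count on $G[\{v\}\cup N(v)]$ using the $1$-planar bound $e \le 4n - 8$ and the large number of edges forced by the two-clique cover.
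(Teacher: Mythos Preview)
Your central claim --- that $N(v)$ can be covered by two cliques --- is false. Independence number at most $2$ means $\overline{G[N(v)]}$ is triangle-free, but triangle-free does not imply bipartite (take $\overline{G[N(v)]}\cong C_5$, so $G[N(v)]\cong C_5$ has clique-cover number $3$). In fact the bipartite-complement case is the \emph{easy} one, and is exactly where your $K_7$ argument works: if $\overline{G[N(v)]}$ is bipartite with parts $X,Y$ and $|N(v)|\ge 11$, then one part has size $\ge 6$ and $G[X\cup\{v\}]\supseteq K_7$, which is not $1$-planar. The paper's proof dispatches that case in one line and then observes that the real content lies in the \emph{nonbipartite} case, which your outline does not address at all.

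For that case the paper combines two refinements you are missing. First, Erd\H{o}s's strengthening of Mantel (Lemma~\ref{erdos}): a triangle-free \emph{nonbipartite} graph on $k$ vertices has at most $\tfrac{(k-1)^2}{4}+1$ edges, giving $e(G[N_G[v]])\ge k+\binom{k}{2}-\tfrac{(k-1)^2}{4}-1$. Second, not the generic $1$-planar bound $e\le 4n-8$ but the sharper $e\le 4n-9$ for $1$-planar graphs with a universal vertex (Lemma~\ref{ed_up}). Both are necessary: with plain Mantel and $4n-8$ the inequalities only force $\Delta\le 12$, and even Erd\H{o}s together with $4n-8$ yields $40\le 40$ at $\Delta=11$, no contradiction. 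Your sharpness construction (two copies of $K_6$ glued at $v$) is essentially the paper's example.
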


Furthermore, we shall prove that a better upper bound on the maximum degree exists for  6-connected claw-free 1-planar graphs.  Moreover, there indeed exist 6-connected 1-planar graphs with maximum degree 8, see Fig. \ref{figa}.

\begin{thm}\label{thm:main_6connected}
Let $G$ be  a $6$-connected $1$-planar graph. If $G$ is claw-free, then $\Delta(G)\le 8 $.
\end{thm}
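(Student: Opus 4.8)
The plan is to argue by contradiction. Suppose $G$ is a $6$-connected claw-free $1$-planar graph that has a vertex $v$ with $d_G(v)=\Delta(G)\ge 9$, and fix a $1$-plane drawing $\Gamma$ of $G$. By Theorem~\ref{main0} we already know $\Delta(G)\le 10$, so $\Delta(G)\in\{9,10\}$; restricting to these two values is the essential use of the general bound. First I would record the constraints that claw-freeness and $1$-planarity impose on $N(v)$: since $G$ is claw-free, $\alpha(G[N(v)])\le 2$, i.e.\ $\overline{G[N(v)]}$ is triangle-free; since $K_7$ is not $1$-planar, $\omega(G)\le 6$, so $\omega(G[N(v)])\le 5$, the set of non-neighbours of any $u\in N(v)$ inside $N(v)$ is a clique and hence (together with $v$) has size at most $5$, and no $u\in N(v)$ is adjacent to all of $N(v)\setminus\{u\}$ (otherwise $G[N(v)]$ contains a $K_6$, and $G$ a $K_7$). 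Using these together with $|N(v)|\ge 9$ and the fact that $G[N[v]]$ is itself $1$-planar, the goal is to force $N(v)$ to be, up to at most one common vertex, the union of two cliques $Q_1,Q_2$ with $4\le|Q_i|\le 5$ --- and, when $\Delta=10$, two disjoint $K_5$'s with no edge between them, which is precisely the local shape of the extremal examples for Theorem~\ref{main0}.

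The technical core is a local analysis of $\Gamma$ around $v$. Fix a clique $Q\subseteq N(v)$ with $|Q|=5$ (one exists since $\Delta\ge 9$), so that $H:=G[\{v\}\cup Q]\cong K_6$. The key point is that $1$-plane drawings of $K_6$ are extremely rigid: $K_6$ has crossing number $3$, the uncrossed part of any such drawing is essentially an embedded octahedron $K_{2,2,2}$ --- which is $4$-connected, hence has a unique plane embedding whose outer face is a triangle --- and the three remaining edges of $K_6$ are then forced to be drawn as the three ``diagonals'', each crossing one edge of the octahedron. I would use this to show that, however $H$ is positioned inside $\Gamma$, at most three vertices of $Q$ can be incident to an edge of $G$ that does not belong to $H$; the other vertices of $Q$ are ``trapped'' inside the region occupied by $H$ and can only be joined to $\{v\}\cup Q$. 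Hence some $u\in Q$ has $d_G(u)\le 5$, contradicting $\delta(G)\ge\kappa(G)\ge 6$. The remaining configurations ($\Delta=10$ with $|Q_1|=|Q_2|=5$ disjoint, and the degenerate subcases where $Q_1\cap Q_2\neq\varnothing$ or some $|Q_i|=4$) are to be dispatched the same way: either a vertex of a size-$5$ clique in $N(v)$ is trapped with too few available neighbours, or the few vertices on the ``outer boundary'' of one of the near-$K_6$'s, together with $v$, form a vertex cut of order at most $5$ --- in either case contradicting $6$-connectivity.

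I expect the drawing step to be the main obstacle. The difficulty is that the edges of the copy of $K_6$ on $\{v\}\cup Q$ need not be crossing-free in $\Gamma$: a priori an edge of $G$ outside $H$ could cross one of the octahedron edges (possibly even one that already carries the crossing of a diagonal) or another edge of $H$, so the ``region occupied by $H$'' is not cleanly delimited and a nominally trapped vertex could still reach out through such a crossing. Overcoming this should require combining (i) the rigidity of $1$-plane $K_6$'s described above; (ii) the fact that $G[N[v]]$ is $1$-planar with the universal vertex $v$, which should force the two near-$K_6$'s on $\{v\}\cup Q_1$ and $\{v\}\cup Q_2$ to occupy regions of $\Gamma$ that meet only at $v$ and to be drawn essentially without interference from the rest of $G$; and (iii) a careful bookkeeping of how many of the edges $vu_1,\dots,vu_\Delta$ are crossed and of which edges they cross. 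Once the two near-$K_6$'s are shown to be cleanly separated at $v$, the trapped-vertex (equivalently, small-cut) contradiction follows, giving $\Delta(G)\le 8$.
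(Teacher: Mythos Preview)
Your plan is a different route from the paper's, and it has at least one concrete gap besides the drawing step you already flag as the main obstacle.

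The paper never looks for a $K_5$ in $N(v)$ or analyses a $1$-plane $K_6$. Instead it works directly with the rotation $\operatorname{rot}_D(u)=[v_1,\dots,v_9]$ and two short local lemmas (Propositions~\ref{prop:4connectedlocal} and~\ref{prop:6connectedlocal}) proved from connectivity via non-separating cycles in $G^\times$: in a $4$-connected $1$-plane graph, if at least two spokes $uv_j$ lie between $ux$ and $uy$ in the rotation, an edge $xy$ cannot cross any of those spokes; in a $6$-connected $1$-plane graph, such an $xy$ cannot cross any edge incident with those intermediate $v_j$ either (because a crossing would produce a separating type-I $4$-cycle in $G^\times$). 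With these in hand the proof is three steps: claw-freeness on $\{v_1,v_4,v_7\}$ forces, say, $v_1\sim v_4$, trapping $v_2,v_3$ inside $uv_1v_4u$; then Proposition~\ref{prop:6connectedlocal} gives $v_2\nsim v_5$ and $v_2\nsim v_8$, so claw-freeness on $\{v_2,v_5,v_8\}$ forces $v_5\sim v_8$, trapping $v_6,v_7$; finally the same proposition yields $v_2\nsim v_6$, $v_2\nsim v_9$, $v_6\nsim v_9$, so $\gamma_G(u,v_2v_6v_9)$ is an induced claw. Crucially, $6$-connectivity is used through the separating $4$-cycle argument, not merely through $\delta\ge 6$; this is the missing idea in your outline.

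On your side, the assertion ``a clique $Q\subseteq N(v)$ with $|Q|=5$ exists since $\Delta\ge 9$'' is not justified. Claw-freeness only makes $\overline{G[N(v)]}$ triangle-free, and since $R(3,5)=14$ there are triangle-free graphs on $9$ or $10$ vertices with independence number~$4$, so $G[N(v)]$ need not contain a $K_5$ on those grounds alone. You would have to combine the edge bound of Lemma~\ref{ed_up} on $G[N[v]]$ with an Andr\'asfai--Erd\H{o}s--S\'os / Ramsey--Tur\'an type argument to force the $K_5$, and you have not done this. Even granting the $K_5$, the ``at most three vertices of $Q$ can reach outside the copy of $K_6$'' step is exactly where the difficulty lies, and you leave it open; the paper's rotation-based argument sidesteps this entirely.
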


\begin{figure}[H]
  \centering
 \begin{tikzpicture}[scale=0.4,bezier bounding box]
 \tikzstyle{whitenode}=[fill=gray!10, draw=black, shape=circle, minimum size=0.2cm, inner
sep=1pt]

	\begin{pgfonlayer}{nodelayer}
		\node [style=whitenode] (9) at (-9.25, 7) {};
		\node [style=whitenode] (10) at (-2.25, 7) {};
		\node [style=whitenode] (11) at (-9.25, 0) {};
		\node [style=whitenode] (12) at (-2.25, 0) {};
		\node [style=whitenode] (13) at (-7.25, 5) {};
		\node [style=whitenode] (15) at (-4.25, 5) {};
		\node [style=whitenode] (16) at (-8.25, 4) {};
		\node [style=whitenode] (17) at (-7.25, 4) {};
		\node [style=whitenode] (18) at (-8.25, 1.5) {};
		\node [style=whitenode] (19) at (-4.25, 1.5) {};
		\node [style=whitenode] (20) at (2.75, 7) {};
		\node [style=whitenode] (21) at (9.75, 7) {};
		\node [style=whitenode] (22) at (2.75, 0) {};
		\node [style=whitenode] (23) at (9.75, 0) {};
		\node [style=whitenode] (24) at (4.75, 6) {};
		\node [style=whitenode] (25) at (7.75, 6) {};
		\node [style=whitenode] (26) at (4, 4.5) {};
		\node [style=whitenode] (27) at (4.75, 4.75) {};
		\node [style=whitenode] (28) at (4, 1.5) {};
		\node [style=whitenode] (29) at (7.75, 1.5) {};
		\node [style=whitenode] (30) at (5.5, 4) {};

	\end{pgfonlayer}
	\begin{pgfonlayer}{edgelayer}
		\draw [style={blackedge_thick}] (9) to (10);
		\draw [style={blackedge_thick}] (10) to (12);
		\draw [style={blackedge_thick}] (12) to (11);
		\draw [style={blackedge_thick}] (11) to (9);
		\draw [style={blackedge_thick}] (13) to (15);
		\draw [style=blueedge] (9) to (15);
		\draw [style=blueedge] (10) to (13);
		\draw [style=blueedge] (15) to (12);
		\draw [style={blackedge_thick}] (9) to (13);
		\draw [style={blackedge_thick}] (15) to (10);
		\draw [style=blueedge, bend left=75, looseness=2.00] (9) to (12);
		\draw [style=blueedge, bend left=285, looseness=2.00] (10) to (11);
		\draw [style={blackedge_thick}] (9) to (16);
		\draw [style={blackedge_thick}] (16) to (18);
		\draw [style={blackedge_thick}] (18) to (11);
		\draw [style={blackedge_thick}] (15) to (19);
		\draw [style={blackedge_thick}] (19) to (18);
		\draw [style={blackedge_thick}] (16) to (17);
		\draw [style={blackedge_thick}] (17) to (19);
		\draw [style={blackedge_thick}] (13) to (17);
		\draw [style=blueedge] (9) to (17);
		\draw [style=blueedge] (13) to (16);
		\draw [style=blueedge] (9) to (18);
		\draw [style=blueedge] (16) to (11);
		\draw [style=blueedge] (11) to (19);
		\draw [style=blueedge] (10) to (19);
		\draw [style=blueedge] (18) to (12);
		\draw [style=blueedge] (16) to (19);
		\draw [style=blueedge] (17) to (18);
		\draw [style=blueedge] (17) to (15);
		\draw [style=blueedge] (13) to (19);
		\draw [style={blackedge_thick}] (20) to (21);
		\draw [style={blackedge_thick}] (21) to (23);
		\draw [style={blackedge_thick}] (23) to (22);
		\draw [style={blackedge_thick}] (22) to (20);
		\draw [style={blackedge_thick}] (24) to (25);
		\draw [style=blueedge] (20) to (25);
		\draw [style=blueedge] (21) to (24);
		\draw [style=blueedge] (25) to (23);
		\draw [style={blackedge_thick}] (20) to (24);
		\draw [style={blackedge_thick}] (25) to (21);
		\draw [style=blueedge, bend left=75, looseness=2.00] (20) to (23);
		\draw [style=blueedge, bend left=285, looseness=2.00] (21) to (22);
		\draw [style={blackedge_thick}] (20) to (26);
		\draw [style={blackedge_thick}] (26) to (28);
		\draw [style={blackedge_thick}] (28) to (22);
		\draw [style={blackedge_thick}] (25) to (29);
		\draw [style={blackedge_thick}] (29) to (28);
		\draw [style={blackedge_thick}] (26) to (27);
		\draw [style={blackedge_thick}] (24) to (27);
		\draw [style=blueedge] (20) to (27);
		\draw [style=blueedge] (24) to (26);
		\draw [style=blueedge] (20) to (28);
		\draw [style=blueedge] (26) to (22);
		\draw [style=blueedge] (22) to (29);
		\draw [style=blueedge] (21) to (29);
		\draw [style=blueedge] (28) to (23);
		\draw [style=blueedge] (27) to (28);
		\draw [style=blueedge] (27) to (25);
		\draw [style=blueedge] (24) to (30);
		\draw [style=blueedge] (30) to (26);
		\draw [style=blueedge] (29) to (30);
		\draw [style=blueedge] (25) to (28);
		\draw [style={blackedge_thick}] (19) to (12);
		\draw [style={blackedge_thick}] (30) to (28);
		\draw [style={blackedge_thick}] (30) to (25);
		\draw [style={blackedge_thick}] (29) to (23);
		\draw [style={blackedge_thick}] (27) to (30);
	\end{pgfonlayer}
\end{tikzpicture}

  \caption{Two 6-connected claw-free  1-planar graphs with maximum degree 8}
  \label{figa}
\end{figure}

Every 1-planar graph has the minimum degree at most 7 and thus has connectivity  at most 7 \cite{Fabrici}. Furthermore, we shall prove that the connectivity of claw-free 1-planar graphs cannot reach the highest connectivity  7.

\begin{thm}\label{thm:connectivity}
Let $G$ be  a  $1$-planar graph. If $G$ is claw-free, then $\kappa(G)\le 6$.
\end{thm}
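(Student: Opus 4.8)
\medskip
\noindent\textbf{Proof proposal.}
The plan is to argue by contradiction. Since every $1$-planar graph has a vertex of degree at most $7$ \cite{Fabrici} and $\kappa(G)\le\delta(G)$ always holds, the statement is equivalent to the non-existence of a $7$-connected claw-free $1$-planar graph; suppose $G$ is such a graph. Then $\delta(G)=7$, and, $G$ being in particular $6$-connected, Theorem~\ref{thm:main_6connected} gives $\Delta(G)\le 8$, so every vertex of $G$ has degree $7$ or $8$. Writing $n=n(G)$ and letting $n_7$ denote the number of degree-$7$ vertices, $2e(G)=\sum_x d_G(x)\le 8n-n_7$, which together with $e(G)\le 4n-8$ forces $n_7\ge 16$, and in particular $n\ge 16$. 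I would then fix a $1$-plane drawing of $G$, pick a vertex $v$ with $d_G(v)=7$, and set $W:=V(G)\setminus N[v]$, so $|W|=n-8\ge 8$.

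The core of the argument is the local structure that claw-freeness and $1$-planarity force around $v$. Because $K_7$ is not $1$-planar, $G$ contains no $K_7$; in particular the $8$-vertex $1$-planar graph $G[N[v]]$ (which has at most $24$ edges) is not complete, so $N(v)$ is a minimum vertex cut separating $v$ from $W$. Claw-freeness then gives, successively: (a) $\alpha(G[N(v)])\le 2$, hence $\overline{G[N(v)]}$ is triangle-free on $7$ vertices and $e(G[N(v)])\ge\binom{7}{2}-\lfloor 7^2/4\rfloor=9$; (b) for each $u\in N(v)$ the set $N(u)\cap W$ is a clique, since otherwise two non-adjacent vertices of $N(u)\cap W$ together with $v$ would form an independent triple in $N(u)$, i.e.\ an induced claw with centre $u$; consequently $\{u\}\cup(N(u)\cap W)$ is a clique and, as $G$ has no $K_7$, $|N(u)\cap W|\le 5$; (c) $G[W]$ is connected, because every component $C$ of $G[W]$ satisfies $N_G(C)\setminus C\subseteq N(v)$ and, being a cut of the $7$-connected graph $G$, $N_G(C)\setminus C=N(v)$, so two components would give some $u\in N(v)$ a neighbour in each, again an induced claw at $u$; (d) each $u\in N(v)$ has at least one neighbour in $W$ (otherwise $N(v)\setminus\{u\}$ would be a $6$-cut) and at least one neighbour in $N(v)$ (otherwise $N(v)\setminus\{u\}$ would induce a $K_6$, which with $v$ gives a $K_7$).

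Finally I would turn (a)--(d) into a contradiction. The linear inequalities by themselves do not suffice — for large $|W|$, the bounds $9\le e(G[N(v)])\le 17$, $7\le d_G(x)\le 8$, $1\le|N(u)\cap W|\le 5$, $e(G[W])\le 4|W|-8$ and $e(G)\le 4n-8$ stay consistent — so one must use the drawing. By (a) there are only finitely many isomorphism types for $G[N(v)]$ (graphs on $7$ vertices with triangle-free complement), and for each I would analyse the $1$-plane drawing of $G[N[v]]$ jointly with the cliques $\{u\}\cup(N(u)\cap W)$, exploiting ancillary facts such as: at most two vertices of $W$ can be adjacent to all of $N(v)$ (three would put at least $37$ edges on a $1$-planar graph of order $11$); and $G[N[v]]$ cannot have $24$ edges, for an optimal $1$-planar graph is $6$-regular while $v$ is adjacent to all $7$ other vertices. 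The aim is to show that every admissible configuration produces a $K_7$, an induced claw at some vertex of $N(v)$ or $W$, or a vertex of degree $\ge 9$ — each impossible — thereby contradicting the existence of $G$. Carrying out this finite but delicate case analysis of the drawing near $v$, essentially re-deploying the structural drawing lemmas developed for Theorems~\ref{main0} and~\ref{thm:main_6connected}, is the main obstacle; the reduction and the local-structure step above are routine by comparison. Sharpness needs nothing new: each of the two graphs in Fig.~\ref{figa} is $6$-connected, claw-free and $1$-planar, so $\kappa(G)\le 6$ is best possible, and equivalently every $7$-connected $1$-planar graph contains an induced claw.
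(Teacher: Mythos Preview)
Your proposal is not a proof: you explicitly leave the decisive step --- the ``finite but delicate case analysis of the drawing near $v$'' --- undone, and everything before it (the invocation of Theorem~\ref{thm:main_6connected}, the counting $n_7\ge 16$, the set $W$, the observations (a)--(d)) is scaffolding that never touches the contradiction. Those observations are correct, but they do not by themselves force an induced claw, a $K_7$, or a degree-$9$ vertex; you say as much when you note that the linear inequalities ``stay consistent''. So the proposal stops exactly where the real work begins.

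More importantly, the route you sketch is far heavier than what is needed. The paper does not use Theorem~\ref{thm:main_6connected}, any edge-counting, or the set $W$ at all. The entire proof is a five-line local argument at a single degree-$7$ vertex $u$ with $\operatorname{rot}_D(u)=[u_1,\dots,u_7]$, driven by Proposition~\ref{prop:7connectedlocal}, which is specific to $7$-connected $1$-plane graphs and which you overlook (you refer instead to the lemmas behind Theorems~\ref{main0} and~\ref{thm:main_6connected}). Part~(i) of that proposition says that in a $7$-connected $1$-plane graph, $u_i\nsim u_j$ whenever the cyclic distance is at least $3$; part~(ii) says that if $u_i\sim u_{i+2}$ then $u_iu_{i+2}$ must cross $uu_{i+1}$. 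From (i), $u_1\nsim u_5$, so claw-freeness on $\{u,u_1,u_3,u_5\}$ forces (up to symmetry) $u_1\sim u_3$, and by (ii) the edge $u_1u_3$ crosses $uu_2$. Then (i) gives $u_4\nsim u_7$, while (ii) would force each of $u_2u_4$ and $u_2u_7$ (if present) to cross $uu_3$ and $uu_1$ respectively --- impossible once $uu_2$ and $u_1u_3$ are already crossed. Hence $\{u,u_2,u_4,u_7\}$ is an induced claw, contradiction. No global structure, no case analysis over isomorphism types of $G[N(v)]$, is required; the key lemma you are missing is precisely Proposition~\ref{prop:7connectedlocal}.
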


Note that the bound in Theorem~\ref{thm:connectivity} is also sharp. In the early preprint version of this paper, we were only aware that the graph $K_{2,2,2,2}$ and the other two graphs in Fig.~\ref{figa} are claw-free 1-planar graphs with connectivity~6. Very recently, Dr.~Sergey Pupyrev announced at the 33rd international symposium on graph drawing and network visualization that he constructed infinitely many claw-free 1-planar graphs with connectivity 6~\cite{Pupyrev}.

As a direct consequence of Theorem~\ref{thm:connectivity}, we have the following corollaries. 
\begin{cor}\label{thm:7_claw}
Every $7$-connected $1$-planar graph contains induced claws. 
\end{cor}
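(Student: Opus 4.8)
The plan is to derive Corollary~\ref{thm:7_claw} immediately from Theorem~\ref{thm:connectivity} by contraposition, so essentially no new work is required. I would argue as follows: let $G$ be a $7$-connected $1$-planar graph, and suppose toward a contradiction that $G$ has no induced subgraph isomorphic to a claw, i.e.\ $G$ is claw-free. Then $G$ is a claw-free $1$-planar graph, so Theorem~\ref{thm:connectivity} applies and yields $\kappa(G)\le 6$. On the other hand, $7$-connectivity means $\kappa(G)\ge 7$, which contradicts $\kappa(G)\le 6$. Hence the assumption fails, and $G$ must contain an induced claw.

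The only points worth spelling out are bookkeeping: that ``$7$-connected'' is interpreted as vertex-connectivity at least $7$, matching the $\kappa$ of Theorem~\ref{thm:connectivity}; and that a graph containing some copy of $K_{1,3}$ as an induced subgraph is precisely the negation of being claw-free, so the phrase ``contains induced claws'' is literally the failure of the claw-free hypothesis. I do not expect any obstacle here — all the substance is already contained in Theorem~\ref{thm:connectivity}, and this corollary merely restates that bound from the perspective of forced induced subgraphs. (One could optionally remark that the bound is best possible in the sense that there exist $6$-connected claw-free $1$-planar graphs, e.g.\ the graphs in Fig.~\ref{figa}, so ``$7$-connected'' cannot be weakened to ``$6$-connected'' in the corollary.)
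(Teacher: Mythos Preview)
Your proposal is correct and matches the paper's own treatment: the corollary is stated there as a direct consequence of Theorem~\ref{thm:connectivity}, with no separate proof given, which is exactly the contraposition you spell out. Nothing more is needed.
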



\begin{cor}
Every $7$-connected $1$-planar graph is not the line graph of any graph.
\end{cor}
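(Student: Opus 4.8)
The plan is to obtain this as an immediate consequence of Corollary~\ref{thm:7_claw} together with the elementary fact that every line graph is claw-free, so the whole argument reduces to recalling why a line graph cannot contain an induced claw and then chaining two implications.

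For the first step I would argue directly. Suppose, for contradiction, that some line graph $G=L(H)$ contained an induced $K_{1,3}$ whose center is the vertex corresponding to an edge $e=uv$ of $H$ and whose three leaves correspond to edges $f_1,f_2,f_3$ of $H$. Since the leaves are adjacent to the center, each $f_i$ shares an endpoint with $e$, i.e.\ passes through $u$ or through $v$; by the pigeonhole principle two of them, say $f_1$ and $f_2$, pass through the same endpoint of $e$. Then $f_1$ and $f_2$ have a common vertex in $H$, so they are adjacent in $L(H)$ — contradicting the fact that the three leaves of an induced claw form an independent set. (Alternatively one may simply quote Beineke's forbidden-subgraph characterization of line graphs~\cite{Beineke}, in which the claw is one of the nine obstructions.) For the second step, let $G$ be a $7$-connected $1$-planar graph; Corollary~\ref{thm:7_claw} produces an induced claw in $G$, so $G$ is not claw-free, and hence by the first step $G$ is not isomorphic to $L(H)$ for any graph $H$, which is exactly the claim.

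I do not anticipate any real obstacle: everything substantive has already been packed into Theorem~\ref{thm:connectivity} and Corollary~\ref{thm:7_claw}. The only point that needs a moment's care is to invoke the \emph{easy} direction of Beineke's theorem — the trivial implication that a line graph is claw-free — rather than the substantive nine-subgraph characterization, which is not needed here.
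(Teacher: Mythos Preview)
Your proposal is correct and matches the paper's intended (implicit) argument: the paper states this corollary without proof, immediately after Corollary~\ref{thm:7_claw}, having already recalled in the introduction (via Beineke~\cite{Beineke}) that line graphs are claw-free. Your pigeonhole justification that line graphs are claw-free and the subsequent chaining of implications is exactly the reasoning the paper leaves to the reader.
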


\noindent \textbf{Outline of the paper.} In the remainder of this section, we introduce the necessary notations and
terminologies. Section \ref{sec:pre} presents  preliminary lemmas. 
Section~\ref{sec:thmproof} proves Theorems \ref{main0}, \ref{thm:main_6connected} and~\ref{thm:connectivity}. 
Section~\ref{sec:conclusion} concludes this article with some remarks and further open problems.


\noindent \textbf{Notations and terminologies.}
Let $G$ be a graph. The notation $\overline{G}$ denotes the \emph{complement} of  $G$. 
If $e=uv$ is an edge of $G$, then we say $u$ and $v$ are {\it adjacent}, $e$ and $u$ (and $v$) are {\it incident}. If $x$ is adjacent to $y$ in $G$, we write $x \sim_G y$; otherwise, $x \not \sim_G y$. When $G$ is clear from the context, we simply write $x \sim y$ (resp. $x \nsim y$). The {\it neighborhood} $N_G(u)$ of a vertex $u$ in $G$ is the set of all neighbors of $u$. The \emph{closed neighborhood} of $u$ is $N_G[u]:=N_G(u)\cup \{u\}$. A \emph{vertex-induced subgraph} of $G$ by a vertex subset $U \subseteq V(G)$, 
denoted by $G[U]$, is the subgraph of $G$ whose vertex set is $U$ and whose edge set 
consists of all edges of $G$ with both endvertices in $U$.   We denote by $C_n$,  $P_n$, and $K_n$ the cycle of order $n$, the path of order $n$, and the complete graph of order $n$, respectively.  A  \emph{$k$-cycle} refers to a cycle consisting of  $k$ edges.
 We define $\gamma_G(u, xyz)$ as a claw  of $G$, where $u$ is the unique vertex of degree three in the claw, and $x, y, z$ are its  vertices of degree one in the claw. The vertex $u$ is referred to as the \emph{center} of $\gamma_G(u, xyz)$.

A {\it drawing} of a graph $G = (V, E)$ is a mapping $D$ that
assigns to each vertex in $V$ a distinct point in the plane (or on
the sphere) and to each edge $uv$ in $E$ a continuous arc connecting $D(u)$ and
$D(v)$.  All 1-planar
drawings considered in this paper are such that no edge
crosses itself, no two edges cross more than once, and no two  edges incident with the same vertex
cross each other.
 Let $G$ be a graph  with a 1-planar drawing $D$.  The subdrawing $D|H$ of $H$ from $D$ is called a \emph{restricted drawing }of $D$. The interior region of a closed non-self-intersecting Jordan curve $\ell$ in the plane is denoted by $\ell_{int}$, and the exterior region by $\ell_{out}$. 
Sometimes we use the notation $x_1x_2\ldots x_i\ldots x_1$ to represent a closed curve, where each $x_i$ is either a vertex or a crossing point  of $D$, and the drawing of the curve is inherited from $D$. Moreover, for a closed non-self-intersecting Jordan curve $\ell$, if a vertex $u \in \ell_{int}$ and a vertex $v \in \ell_{out}$, we say that $u$ and $v$ are \emph{separated} by  $\ell$. 
The \emph{rotation} $\operatorname{rot}_D\left(u\right)$ of a vertex $u$ in the drawing $D$ is the cyclic permutation that records the (cyclic) counter-clockwise order in which the edges leave $u$. More formal,  we use the notation $\operatorname{rot}_D\left(u\right)=[v_1,v_2,\ldots, v_d]$  to denote the counter-clockwise order the edges incident with the vertex $u$ is $u v_1, u v_2,  \ldots, uv_d$.

%

\section{Preliminary lemmas}\label{sec:pre}
In this section, we present several lemmas. 
%

%
 



A \emph{bipartite graph} $G$ is a graph whose vertex set $V$ can be partitioned into two nonempty subsets $A$ and $B$ (i.e., $A \cup B = V$ and $A \cap B = \varnothing$) such that each edge of $G$ has one endvertex in $A$ and one endvertex in $B$.
The well-known  Mantel's theorem  states that any triangle-free  graph of order $n$ contains at most $\frac{n^2}{4}$ edges. In 1962, Erd\H{o}s \cite{Erdos}  showed the following result, which extends Mantel's theorem to nonbipartite graphs.
\begin{lem}[Erd\H{o}s  \cite{Erdos}]\label{erdos}
Let $G$ be a triangle-free graph of order $n$. If $G$ is not bipartite, then $e(G)\le \frac{1}{4}(n-1)^2+1$.
\end{lem}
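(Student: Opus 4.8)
The plan is to run the classical shortest-odd-cycle argument and finish with Mantel's theorem. Since $G$ is not bipartite it contains an odd cycle, and since $G$ is triangle-free every odd cycle has length at least $5$; fix a shortest odd cycle $C=v_0v_1\cdots v_{2k}$ (indices read modulo $2k+1$), so $k\ge 2$. Put $U=V(G)\setminus V(C)$ and $m=|U|=n-2k-1\ge 0$, and split $e(G)=e(G[V(C)])+e_G(V(C),U)+e(G[U])$, where $e_G(V(C),U)$ denotes the number of edges of $G$ with one endvertex in $V(C)$ and one in $U$.

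First I would check that $C$ is an induced cycle, so that $e(G[V(C)])=2k+1$. Indeed a chord $v_iv_j$ would split $C$ into two arcs of lengths $a$ and $b$ with $a+b=2k+1$, and then one of the two cycles formed by a single arc together with the chord, which have lengths $a+1$ and $b+1$, is odd and has length at most $2k$; such a cycle is neither a triangle (as $G$ is triangle-free) nor a shorter odd cycle (by the choice of $C$), a contradiction. Next I would show that every $u\in U$ has at most two neighbours on $C$. Given two neighbours $v_i,v_j$ of $u$, the two arcs of $C$ between them have lengths $p$ and $q$ with $p+q=2k+1$, and the two cycles through $u$ obtained by adjoining the edges $uv_i,uv_j$ have lengths $p+2$ and $q+2$, exactly one of which is odd; minimality of $C$ forces that odd length to be at least $2k+1$, hence the other arc has length at most $2$, and length $1$ is excluded (it would produce a triangle at $u$), so the two arcs have lengths exactly $2$ and $2k-1$. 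If $u$ had three neighbours on $C$, they would cut $C$ into three arcs, and applying the previous fact to each pair of them forces each of these three arcs to have length $2$ --- the only alternative makes the remaining neighbour the midpoint of a length-$2$ arc joining the other two, hence a triangle with $u$ --- whence $2k+1=6$, impossible. Therefore $e_G(V(C),U)\le 2m$, and Mantel's theorem applied to the triangle-free graph $G[U]$ gives $e(G[U])\le m^2/4$.

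Putting these together yields $e(G)\le (2k+1)+2m+\tfrac{m^2}{4}$. Since $n-1=m+2k$, we have $\tfrac14(n-1)^2+1=\tfrac{m^2}{4}+km+k^2+1$, so the claimed bound is equivalent to $(2k+1)+2m\le km+k^2+1$, i.e.\ to $(k-2)(m+k)\ge 0$, which holds because $k\ge 2$ and $m+k>0$. This proves $e(G)\le\tfrac14(n-1)^2+1$.

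The argument is elementary throughout, and the only step that calls for genuine care is the claim that each vertex outside $C$ has at most two neighbours on $C$; in particular, excluding three neighbours relies on the small observation that two of them would otherwise be the endpoints of a length-$2$ arc of $C$ and so form a triangle with $u$. Everything else is routine bookkeeping, and the closing estimate reduces to the single inequality $(k-2)(m+k)\ge 0$.
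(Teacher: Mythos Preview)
Your proof is correct. The paper itself does not prove this lemma at all; it merely states the result and cites Erd\H{o}s's 1962 paper, using the bound as a black box in the proof of Theorem~\ref{main0}. What you have supplied is the standard shortest-odd-cycle argument, and every step checks out: $C$ is induced because a chord would yield a shorter odd cycle (or a triangle); each vertex of $U$ has at most two neighbours on $C$ because any two such neighbours must lie at arc-distance exactly $2$, and three of them would then force $|C|=6$ (your parenthetical remark handles the residual case where the ``wrong'' arc has length $2$, since then the third neighbour would be adjacent on $C$ to the other two and create a triangle); and the final bookkeeping correctly reduces to $(k-2)(m+k)\ge 0$. So your argument is a genuine, self-contained proof where the paper offers only a citation.
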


In 2019, Ouyang, Ge and Chen \cite{ouyang2019} determined the following (tight) upper bound on the size of an $n$-vertex 1-planar graph $G$ with $\Delta(G)=n-1$.

\begin{lem}[Ouyang, Ge and Chen \cite{ouyang2019}, Remark 2]\label{ed_up}
Let $G$ be a $1$-planar graph of order $n\ge 3$. If $\Delta(G)=n-1$, then $e(G)\le 4n-9$.
\end{lem}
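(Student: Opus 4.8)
\section*{A plan for proving Lemma~\ref{ed_up}}

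The task is to bound $e(G)$ when $G$ is a $1$-plane graph on $n\ge 3$ vertices possessing a vertex $v$ adjacent to all the others, and I would fix a $1$-plane drawing $D$ of $G$ and run the standard planarization argument, squeezing one extra crossing out of the universal vertex. First I would pass to the planarization $D^{\times}$: replace each of the $c$ crossings of $D$ by a new vertex of degree $4$. Since each edge of $G$ is crossed at most once, each segment incident with a crossing ends at a genuine vertex, so the set $X$ of crossing vertices is independent in $D^{\times}$; moreover $|V(D^{\times})|=n+c$ and $|E(D^{\times})|=e(G)+2c$. Euler's formula for the plane graph $D^{\times}$ then yields $e(G)\le 3n-6+c$, and the whole point is to prove $c\le n-3$ (one better than the classical $c\le n-2$).

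Next I would bound $c$ using $v$. Let $B$ be the bipartite plane graph consisting of all edges of $D^{\times}$ incident with $X$: one side is $X$ (size $c$, every vertex of degree $4$), the other side is the set $V'$ of endpoints of crossed edges, and $B$ is simple because two adjacent edges of $G$ never cross. Every face of $B$ has even length $\ge 4$, so counting edges through faces gives $4c\le 2(c+|V'|)-4$, i.e. $c\le|V'|-2\le n-2$, with equality forcing $B$ to be a connected quadrangulation with $V'=V$. Two cases are now immediate: if no edge incident with $v$ is crossed, then $v\notin V'$, so $|V'|\le n-1$ and $c\le n-3$; and if some face of $B$ has length $\ge 6$, the same count improves to $4c\le 2(c+|V'|)-6$, again giving $c\le n-3$. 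Either way $e(G)\le 3n-6+(n-3)=4n-9$.

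It remains to exclude the rigid extremal configuration, namely $e(G)=4n-8$, which by the above inequalities forces $c=n-2$, $B$ a quadrangulation, $V'=V$, and $D^{\times}$ a plane triangulation in which the $n-2$ crossing vertices form an independent set of degree-$4$ vertices -- i.e. $G$ is an \emph{optimal} $1$-planar graph. Here I would invoke the structure of optimal $1$-planar graphs: deleting the crossing vertices from $D^{\times}$ leaves a simple $3$-connected plane quadrangulation $Q$, bipartite with classes $A\ni v$ and $B_0$, and $G$ is $Q$ together with both diagonals inserted into every quadrilateral face. Because diagonals stay within a class, $v$ being adjacent to everything forces $N_Q(v)=B_0$ and forces the opposite corners of the $|B_0|$ faces around $v$ to cover $A\setminus\{v\}$; feeding this into $\sum_{a\in A}\deg_Q(a)=e(Q)=2n-4$ together with $\delta(Q)\ge 3$ bounds $|A|$ from above and hence $|B_0|$ from below, after which the symmetric count $\sum_{b\in B_0}\deg_Q(b)=2n-4$ with $\delta(Q)\ge 3$ confines $n$ to a bounded set. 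For the finitely many surviving orders -- in particular the only admissible small one, $n=8$, where $Q$ is the cube and $G=K_{2,2,2,2}$, which has no universal vertex -- one verifies directly that no optimal $1$-planar graph has a universal vertex, a contradiction. Hence $e(G)\le 4n-9$.

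The hard part is this last step: one must control precisely how the universal vertex $v$ sits inside the rigid kite structure of an extremal $1$-planar drawing. Two technical hazards deserve attention. First, $B$ (and $Q$) need not be more than $2$-connected a priori, so the degree counting should be supported either by the genuine $3$-connectivity that an optimal $1$-planar graph does enjoy, or by arguing directly that the faces of $B$ surrounding $v$ must create a face of length $\ge 6$ after all. Second, the degree count only pins $n$ down to a bounded range rather than giving an outright contradiction, so a short finite check (or appeal to the explicit list of small optimal $1$-planar graphs) is needed to finish; I expect this bookkeeping, not the global inequality, to be the real work.
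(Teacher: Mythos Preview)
The paper does not prove this lemma at all: it is quoted verbatim from Ouyang, Ge and Chen~\cite{ouyang2019} (Remark~2) and used as a black box in the proof of Theorem~\ref{main0}. So there is no ``paper's own proof'' to compare against; your proposal is a self-contained argument for a cited result.

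That said, your outline is sound up to the last paragraph, but the endgame is mis-stated. Your two degree sums over $A$ and over $B_0$ do \emph{not} confine $n$ to a bounded set: they yield $|A|\le\frac{n-1}{2}$ and $|B_0|\le\frac{2n-4}{3}$, which together give $n\ge 11$, an unbounded range, so the promised ``finite check'' never materializes. The missing ingredient is the reverse inequality coming from simplicity of $G$: the $d_Q(v)=|B_0|$ diagonals leaving $v$ land on $|B_0|$ \emph{distinct} vertices of $A\setminus\{v\}$, hence $|B_0|\le |A|-1$. Combined with $|A|\le\frac{n-1}{2}$ this forces $|B_0|\le\frac{n-3}{2}$, while $|B_0|=n-|A|\ge\frac{n+1}{2}$, an immediate contradiction with no case analysis required. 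Equivalently, once you know $|A|-1=|B_0|=\frac{n-1}{2}$ exactly, the $A$-side sum gives $\sum_{a\ne v}d_Q(a)=\frac{3n-7}{2}<3\cdot\frac{n-1}{2}$, violating $\delta(Q)\ge 3$. So your ``hard part'' is in fact a two-line computation; the only genuine technical debt is justifying that the skeleton quadrangulation $Q$ of an optimal $1$-planar graph is simple and $3$-connected (so that the face count at $v$ equals $d_Q(v)$ and $\delta(Q)\ge 3$), which is a known structural result you should cite rather than reprove.
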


Next, we   describe local structures around a vertex in 4-, 6-, and 7-connected 1-planar graphs, respectively.  These structures depend only on connectivity and 1-planarity, not on the claw-free condition. These preparations are intended solely for Theorems~\ref{thm:main_6connected} and~\ref{thm:connectivity}; readers interested only in the proof of Theorem~\ref{main0} may skip the rest of this section.   Let $C$ be a cycle in a planar graph $G$.
We say that a cycle $C$ of $G$ is a \emph{separating cycle} if both the interior and the exterior of $C$ contain vertices of $G$.
Otherwise, $C$ is called a \emph{non-separating} cycle of $G$.  The \emph{associated plane graph} $G^{\times} $ of $G$ is the plane graph  obtained from $G$ by turning all crossings of $G$ into new vertices of degree four. A vertex in $G^{\times}$ is called \emph{fake} if it corresponds to some crossing of $G$, and is \emph{true} otherwise.

If $G^\times$ contains a 4-cycle $xyczx$ with exactly one fake vertex $c$, it is easy to verify that in $G$ this cycle can occur only in the two configurations shown in Fig.~\ref{4_cycle} (where $c$ is the unique fake vertex). We refer to the 4-cycle $xyczx$ in $G^\times$ arising from case (a) in Fig.~\ref{4_cycle} as a type-I 4-cycle.

\begin{figure}[H]
\centering
\begin{tikzpicture}[scale=0.6]
	\begin{pgfonlayer}{nodelayer}
		\node [style=whitenode] (0) at (-5, -1) {$x$};
		\node [style=whitenode] (1) at (-8, 1) {$y$};
		\node [style=whitenode] (2) at (-2, 1) {$z$};
		\node [style=whitenode] (3) at (-6, 4) {$e$};
		\node [style=whitenode] (4) at (-4, 4) {$d$};
		\node [style=pinknode] (5) at (-5, 3.25) {$c$};
		\node [style=whitenode] (6) at (5, -1) {$x$};
		\node [style=whitenode] (7) at (2, 2) {$y$};
		\node [style=whitenode] (8) at (8.25, 2) {$z$};
		\node [style=none] (9) at (5, 3) {};
		\node [style=none] (10) at (5, 0.75) {};
		\node [style=pinknode] (11) at (5, 2) {$c$};
		\node [style=none] (12) at (-5, -3) {(a)};
		\node [style=none] (13) at (5, -3) {(b)};
	\end{pgfonlayer}
	\begin{pgfonlayer}{edgelayer}
		\draw [style=blackedge] (1) to (0);
		\draw [style=blackedge] (2) to (0);
		\draw [style=blackedge] (4) to (1);
		\draw [style=blackedge] (3) to (2);
		\draw [style=blackedge] (7) to (6);
		\draw [style=blackedge] (8) to (6);
		\draw [style=blackedge] (7) to (8);
		\draw [style=blackedge, in=90, out=-90] (9.center) to (10.center);
	\end{pgfonlayer}
\end{tikzpicture}
\label{4_cycle}
\end{figure}

\begin{lem}\label{lem:4_connected_abxa}
Let $G$ be a 1-plane graph. Then the following statements hold.
\begin{itemize}
  \item[(i)] If $\kappa(G)\ge 4$, then  any 3-cycle of $G^\times$ containing exactly one fake vertex is non-separating;
  \item[(ii)] if $\kappa(G)\ge 6$, then  any type-I 4-cycle of $G^\times$  is non-separating; and
  \item[(iii)]  if $\kappa(G)= 7$, then  any 3-cycle of $G^\times$  is non-separating.
\end{itemize}
\end{lem}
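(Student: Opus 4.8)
\textbf{Proof proposal for Lemma~\ref{lem:4_connected_abxa}.}

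The plan is to treat the three statements uniformly by a contradiction argument: assume the relevant cycle in $G^\times$ is separating, and exhibit a vertex cut of $G$ that is too small. In each case, let $\ell$ be the closed Jordan curve traced by the cycle in question, so that by assumption both $\ell_{int}$ and $\ell_{out}$ contain true vertices of $G$. The key observation is that the true vertices lying on $\ell$, together with at most one pair of ``hidden'' true vertices coming from the edges of $G$ that actually cross at a fake vertex of $\ell$, form a separating set of $G$: once a true vertex $u$ is separated from a true vertex $v$ by $\ell$, any $u$--$v$ path in $G$ must, when drawn, cross $\ell$, and since $\ell$ is built from edges of $G^\times$ this crossing happens either at a true vertex on $\ell$ or along an edge of $G$ incident with a fake vertex on $\ell$ — and in the latter case the two endpoints of that $G$-edge must be included in the cut. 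So the proof amounts to bookkeeping: count how many true vertices of $G$ one is forced to delete to disconnect $\ell_{int}$ from $\ell_{out}$.

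For part (i), a $3$-cycle $xycx$ of $G^\times$ with $c$ the unique fake vertex has true vertices $x,y$ on it, and $c$ is the crossing of two $G$-edges, say $de$ and the $G$-edge through the portion of $\ell$ near $c$; since the cycle uses two of the four ``arms'' at $c$, the edge $de$ is split by $c$ into a piece inside and a piece outside $\ell$, so $d,e$ are separated by $\ell$ unless one of them equals $x$ or $y$ — in all cases deleting $\{x,y,d,e\}$ (at most four vertices, and fewer after identifications) disconnects $G$, contradicting $\kappa(G)\ge 4$. The only subtlety is ruling out degenerate positions of $d,e$, for which one uses that $G$ is simple and that incident edges do not cross. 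For part (ii), a type-I $4$-cycle $xyczx$ (configuration (a) of Fig.~\ref{4_cycle}) has true vertices $x,y,z$ on it and one fake vertex $c$ which is the crossing of the edge $de$; the same reasoning gives the cut $\{x,y,z,d,e\}$, but the defining feature of type-I — as opposed to type-II, configuration (b) — is exactly that the two $G$-edges meeting at $c$ are $yd$-type chords reaching genuinely new vertices $d,e$ rather than both being subarcs of the single edge $yz$, so neither $d$ nor $e$ coincides with $\{x,y,z\}$, and the cut has size at most $5$, contradicting $\kappa(G)\ge 6$. (In configuration (b) the analogous set would be just $\{x,y,z\}$, which is why the hypothesis is only $\kappa\ge 4$ in part (i) and why type-II $4$-cycles are not claimed non-separating here.) For part (iii), an arbitrary $3$-cycle of $G^\times$ has at most three fake vertices; if it has zero it is a triangle of $G$ and the cut is its three true vertices; if it has one fake vertex $c$ we are in the situation of part (i) with cut $\{x,y,d,e\}$ of size $\le 4 < 7$; and if it has two or three fake vertices the cut is even smaller (one or zero true vertices on $\ell$, plus the split edges), so in every case $\kappa(G)\le 6$, contradicting $\kappa(G)=7$.

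The main obstacle I expect is not the counting but making rigorous the claim that ``a $u$--$v$ path with $u,v$ separated by $\ell$ must cross $\ell$ at a true vertex of $\ell$ or at a $G$-edge through a fake vertex of $\ell$, with no other escape route.'' This requires care because $\ell$ is a curve in the $1$-plane drawing, not in a genuine plane embedding: an edge of the path could in principle cross an edge of $\ell$ \emph{away} from the fake vertices. Here one must invoke the drawing conventions stated in the excerpt — no two edges cross more than once, and, crucially, each edge of $\ell$ either is a true edge of $G$ lying in $G^\times$ without further crossings along the relevant arc, or is one of two subarcs of an edge crossed exactly once at the fake vertex $c$ — so that a path-edge crossing such an arc would force a second crossing on some edge, contradiction; hence every crossing of the path with $\ell$ is concentrated at the vertices (true or fake) of $\ell$, and at a fake vertex the path must actually pass through one of the four arm-edges, whose far endpoint lies in the cut. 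Once this localization lemma is nailed down, the three parts follow by the routine case analysis sketched above.
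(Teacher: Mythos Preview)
Your approach is exactly the paper's (which dispatches the lemma in one sentence via Menger's theorem): exhibit a small vertex cut consisting of the true vertices on the cycle together with endpoints of the crossed edges at the fake vertex. Parts (ii) and (iii) go through as you describe --- indeed for a type-I $4$-cycle you are giving away more than necessary, since the two ``free'' arms $cd$ and $ce$ at $c$ point into the \emph{same} side of $\ell$ (the arms $cy,cz$ used by $\ell$ are adjacent in the rotation at $c$, because the crossing edges have alternating arms), so $\{x,y,z\}$ already separates.

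There is, however, a genuine gap in your treatment of part~(i). You assert that at the fake vertex $c$ one crossing $G$-edge lies along $\ell$ while the other, $de$, is transversal and hence split by $\ell$. This picture is wrong: the two arms of $\ell$ at $c$ are the half-edges $yc$ and $cx$, and since $xy$ is an uncrossed $G$-edge (being an edge of $G^\times$ between two true vertices) there is no second $xy$-edge through $c$, so $yc$ and $cx$ must come from \emph{different} crossing edges, say $yb$ and $xa$. Because crossing edges alternate at $c$, the remaining arms $ca$ and $cb$ are adjacent and lie on the same side of $\ell$; no $G$-edge goes from $\ell_{int}$ to $\ell_{out}$ at all. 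Consequently your candidate cut $\{x,y,d,e\}$ has size $4$ if you insist that $d,e\notin\{x,y\}$, and a cut of size $4$ does not contradict $\kappa(G)\ge 4$ --- the appeal to unspecified ``identifications'' does not close this. The fix is immediate once the correct local picture is in place: $\{x,y\}$ is already a cut of size $2$. (For part~(iii), note also that fake vertices form an independent set in $G^\times$, so a $3$-cycle of $G^\times$ has at most one fake vertex; your ``two or three fake vertices'' cases are vacuous, which is just as well since the estimate ``even smaller'' offered there is not justified.)
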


\begin{proof}
The lemma follows directly from the 1-planarity and the well-known Menger's theorem(see page 210 in \cite{Bondy}).
\end{proof}

\begin{pro}\label{prop:4connectedlocal}
Let $G$ be a $4$-connected 1-plane graph, and let $u$ be a vertex of $G$. Let $u x$ and  $u y$ be two edges of $G$. Let $S$ denote the set of edges lying between $u x$ and $u y$ in $\operatorname{rot}_D(u)$.  If $|S| \geq 2$, then the edge $xy$ (if it exists) does not cross any edge in $S$.
\end{pro}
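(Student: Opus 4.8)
\textbf{Proof proposal for Proposition~\ref{prop:4connectedlocal}.}
The plan is to argue by contradiction: suppose the edge $xy$ exists and crosses some edge $e \in S$, and derive a violation of $4$-connectivity by exhibiting a small vertex cut. The basic geometric picture is that $ux$, $uy$ together with a portion of the edge $xy$ enclose a region of the plane containing all of $S$ on one side. Since $|S| \ge 2$, there are at least two edges of $G$ incident with $u$ trapped in that region, so the endpoints of these edges other than $u$ are placed in a way that, together with the crossing structure, can be separated from the rest of $G$ by only a few vertices.

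More concretely, I would first set up notation for the crossing: let $e = ab$ be the edge of $S$ that $xy$ crosses, and let $p$ be the crossing point of $xy$ and $ab$. Consider the closed curve $\ell$ formed by the segment of $ux$ from $u$ to $x$, the segment of $xy$ from $x$ to $p$, and then one of the two segments of $ab$ from $p$ back toward $u$ — wait, $ab$ need not be incident with $u$, so instead I would use the curve $\ell = u\,x\,p\,\ldots$ closed up through $uy$: trace $u$ to $x$ along $ux$, then $x$ to $p$ along $xy$, then $p$ to $y$ along $xy$? No — $xy$ crosses $ab$ at $p$, so $xy$ goes $x \to p \to y$ and $\ell := u x p y u$ (using $ux$, the two halves of $xy$ broken at $p$, and $uy$) is a closed non-self-intersecting curve, since $ux$, $uy$ are incident with $u$ and cannot cross $xy$ (same vertex is not involved, but they could cross — here I must invoke that $ux, uy, xy$ form at most a near-triangle and 1-planarity limits crossings). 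The key point: every edge of $S$ leaves $u$ into the region bounded by $ux$ and $uy$ on the side containing the edges of $S$, hence each edge of $S$ starts inside $\ell_{int}$ (or on $\ell$). I would then show that the neighbors of $u$ along edges of $S$ lie in $\overline{\ell_{int}}$ together with at most the vertices $x, y, a, b$, so that $\{x, y, a, b\}$ (possibly fewer) separates these neighbors from the rest — but $u$ itself has degree $\ge 4$ and its $S$-neighbors plus more are trapped, so if the trapped set is nonempty and its complement is nonempty we get a cut of size $\le 4$ not equal to $\{$all of $G$ minus trapped$\}$; the contradiction with $\kappa(G) \ge 4$ needs the cut to have size $< 4$ or to be a genuine separator with both sides nonempty. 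The honest route is: an edge $uz \in S$ has $z$ inside $\ell$; then $z$ is separated from $V(G) \setminus (\ell_{int} \cup \{u,x,y,a,b\})$ by a subset of $\{u,x,y,a,b\}$ of size at most... and I need this to be $\le 3$, which should follow because $u$ lies ON $\ell$ (not strictly inside), so $u$ is not needed to separate interior from exterior; the separator is contained in $\{x,y,a,b\} \setminus \{u\}$, but one checks via the $1$-planar crossing rules (incident edges don't cross, two edges cross at most once) that $a$ or $b$ can be taken outside, trimming to a $3$-cut. Applying Menger/$4$-connectivity then forces $\ell_{int}$ to contain no vertex, contradicting $uz \in S$ landing $z$ inside.

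The main obstacle I anticipate is making the curve $\ell$ genuinely simple (non-self-intersecting) and correctly bookkeeping which of $x, y, a, b$ are actually needed in the separator, so that the cut has size strictly less than $4$ (or is a legitimate separator with both sides nonempty and size $4$, contradicting nothing — so I really do need size $\le 3$). This is where 1-planarity is essential: I will use repeatedly that no two edges incident to $u$ cross, that $xy$ crosses $ab$ exactly once and crosses nothing else relevant, and that the edges of $S$ emanate from $u$ strictly between $ux$ and $uy$. A careful case split on whether $a$ or $b$ (or both) lies inside $\ell$, and on whether the $S$-edge under consideration is itself $ab$ or a different edge, should reduce everything to Lemma~\ref{lem:4_connected_abxa}(i) applied to a suitable $3$-cycle of $G^\times$ — indeed the triangle $u x p$ or $u y p$ in $G^\times$ (with $p$ the fake vertex from the $xy$–$ab$ crossing) has exactly one fake vertex and must be non-separating, which is exactly the statement that no vertex of $G$ lies on the $S$-side, contradicting $|S| \ge 2$. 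I expect the cleanest writeup to route through Lemma~\ref{lem:4_connected_abxa}(i) rather than re-deriving a Menger argument from scratch.
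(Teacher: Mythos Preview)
Your final paragraph lands on exactly the paper's argument: the crossing point $p$ (the paper calls it $c$) is a fake vertex, the two $3$-cycles $uxp$ and $uyp$ in $G^\times$ each contain exactly one fake vertex, and since $|S|\ge 2$ some other $S$-edge has its non-$u$ endpoint trapped strictly inside one of them, contradicting Lemma~\ref{lem:4_connected_abxa}(i). So the approach is correct and the same as the paper's.

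What costs you the clean writeup is the mid-proof confusion ``wait, $ab$ need not be incident with $u$''. It \emph{must} be: by definition $S$ is a set of edges in $\operatorname{rot}_D(u)$, hence every edge of $S$ has the form $uv$. Once you write the crossed edge as $uv$, the arc $u\to p$ along $uv$ is available immediately, and $uxpu$, $uypu$ are visibly $3$-cycles in $G^\times$ with one fake vertex each---no need for the detour through the curve $\ell=uxpyu$, the Menger-style separator bookkeeping on $\{x,y,a,b\}$, or any case split. The paper's proof is four lines for this reason: take $uv\in S$ crossed by $xy$ at $c$; the second $S$-edge forces a true vertex into $(uxcu)_{int}$ or $(uycu)_{int}$; apply Lemma~\ref{lem:4_connected_abxa}(i); done. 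Your instinct to ``go from $p$ back toward $u$ along $ab$'' was right---trust it.
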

\begin{proof}
For   $uv\in S$, suppose that  $xy$ crosses $uv$ at a crossing $c$.  Without loss of generality, we assume that $uxyu$ bounds a finite region and $v\in (uxyu)_{out}$, shown in Fig. \ref{fig:Schematicdiagrams}(a). Moreover, since $|S| \ge 2$ and by 1-planarity, there exists an endvertex of an edge in $S$, distinct from $uv$, that lies in $(uxcu)_{int}$ or $(uycu)_{int}$.  Therefore  $uxcu$ or $uycu$ is a separating 3-cycle in $G^\times$, a contradiction to  Lemma \ref{lem:4_connected_abxa}(i). 
\end{proof}

\begin{pro}\label{prop:6connectedlocal}
Let $G$ be a $6$-connected 1-plane graph, and let $u$ be a vertex of $G$. Let $u x$ and  $uy$ be two edges of $G$.  Let $S=\{uv_1,uv_2,\ldots, uv_s\}$ denote the set of edges lying between $u x$ and $u y$ in $\operatorname{rot}_D(u)$. If $|S| \geq 2$ and $x\sim y$, then the edge $xy$ does not cross any edge incident with any vertex in $\{v_1,v_2, \ldots, v_s\}$.
\end{pro}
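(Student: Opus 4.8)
The plan is to argue by contradiction, following the same philosophy as Proposition~\ref{prop:4connectedlocal}, but now exploiting the stronger hypothesis $\kappa(G)\ge 6$ together with Lemma~\ref{lem:4_connected_abxa}(ii) on type-I $4$-cycles. Suppose the edge $xy$ crosses some edge $v_iw$ incident with a vertex $v_i\in\{v_1,\dots,v_s\}$, at a crossing point $c$. Since $ux$, $uv_1,\dots,uv_s$, $uy$ appear consecutively (in this cyclic order) in $\operatorname{rot}_D(u)$ and $xy$ is present, the closed curve $\gamma=uxyu$ (formed by the edges $ux$, $xy$, $yu$) is a Jordan curve, and all the edges $uv_1,\dots,uv_s$ leave $u$ into the same side of $\gamma$; without loss of generality the bounded region $\gamma_{int}$ is that side, so $v_1,\dots,v_s$ together with the crossing $c$ all lie in $\gamma_{int}$ (here one uses that an edge incident to $u$ cannot cross $ux$ or $uy$, and that $xy$ is crossed only once, by $v_iw$).

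Next I would localize the picture around the crossing $c$. The edge $xy$ is split by $c$ into two arcs, $xc$ and $cy$; together with the two sub-arcs of $ux$ and $uy$ this exhibits $u x c u$ and $u y c u$ as closed curves bounding subregions of $\gamma_{int}$. The edge $uv_i$ enters exactly one of these two subregions — say $uv_i$ lies in $(uxcu)_{int}$, the other case being symmetric — so that $v_i\in(uxcu)_{int}$. In $G^\times$, the crossing $c$ is a fake vertex, and $uxcu$ is a $3$-cycle of $G^\times$ with exactly one fake vertex containing $v_i$ in its interior; by Lemma~\ref{lem:4_connected_abxa}(i) such a $3$-cycle is non-separating, which already forces every vertex of $G$ other than those of $uxcu$ to lie inside it, in particular $y\in(uxcu)_{int}$ — but $y$ is an endpoint of the arc $cy$ which bounds $(uycu)_{int}$, contradiction, unless the whole of $V(G)\setminus\{u,x\}$ collapses onto this tiny region, which is impossible since $v_i\sim w$ forces $w$ somewhere and $|S|\ge 2$ gives another neighbour $v_j$ of $u$ that must also sit inside $uxcu$, making $uxcu$ separating after all. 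To make this rigorous I will instead use the fake vertex $c$ to build a type-I $4$-cycle: consider the $4$-cycle $u\,x\,c\,v_i\,u$ of $G^\times$ (edges $ux$, the arc $xc$ of $xy$, the arc $cv_i$ of $v_iw$, and $v_iu$), whose unique fake vertex is $c$; one checks this is exactly configuration (a) of Fig.~\ref{4_cycle}, i.e.\ a type-I $4$-cycle. Since $|S|\ge 2$, at least one further edge $uv_j$ ($j\ne i$) leaves $u$ strictly between $ux$ and $uv_i$ or strictly between $uv_i$ and $uy$ in $\operatorname{rot}_D(u)$; in the first case $v_j$ is separated from $y$ by this $4$-cycle, and in the second case the symmetric $4$-cycle $u\,y\,c\,v_i\,u$ (also type-I) separates $v_j$ from $x$. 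Either way we obtain a separating type-I $4$-cycle in $G^\times$, contradicting Lemma~\ref{lem:4_connected_abxa}(ii).

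The main obstacle, and the step demanding the most care, is the purely topological bookkeeping: verifying that the four arcs really do close up into a $4$-cycle of $G^\times$ with $c$ as its \emph{only} fake vertex (in particular that $v_iw$ is crossed only at $c$, which holds since $xy$ already uses up the single permitted crossing on $v_iw$ — wait, $v_iw$ may a priori be crossed elsewhere, so one must instead note that the relevant \emph{arc} $cv_i$ of $v_iw$ lies in $\gamma_{int}$ and can be taken crossing-free up to $c$ by choosing $c$ to be the crossing nearest $v_i$ along $v_iw$ that lies on $xy$), and that it matches the type-I configuration (a) rather than (b). I would handle this by a careful case analysis on which side of $uv_i$ the crossing $c$ falls and on the position of the witnessing edge $uv_j$, reducing every case to a separating $3$-cycle (Lemma~\ref{lem:4_connected_abxa}(i)) or a separating type-I $4$-cycle (Lemma~\ref{lem:4_connected_abxa}(ii)). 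Once the configuration is pinned down, the contradiction is immediate from $6$-connectivity, so essentially all the work is in the drawing.
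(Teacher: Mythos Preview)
Your approach is essentially the paper's: assume $xy$ crosses an edge $v_iw$ at $c$, place the $v_j$'s inside the Jordan curve $uxyu$, form the $4$-cycle $uxcv_iu$ (or $uycv_iu$) in $G^\times$, observe it is type-I, and use $|S|\ge 2$ to find a $v_j$ ($j\ne i$) on one side, contradicting Lemma~\ref{lem:4_connected_abxa}(ii). Two minor points deserve tightening. First, you never explicitly dispose of the case $w=u$, i.e.\ the case where the edge crossed is $uv_i\in S$ itself; your claim that ``$v_1,\dots,v_s$ all lie in $\gamma_{int}$'' fails exactly in that case, and the $4$-cycle $uxcv_iu$ degenerates. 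The paper handles this cleanly by first invoking Proposition~\ref{prop:4connectedlocal} to rule out $xy$ crossing any edge of $S$, after which $w\ne u$ and all $v_j\in(uxyu)_{int}$ follow immediately. Second, your worry that ``$v_iw$ may a priori be crossed elsewhere'' is unfounded: the drawing is $1$-planar, so $v_iw$ is crossed at most once, and that crossing is $c$; hence the arc $cv_i$ is automatically crossing-free and the $4$-cycle $uxcv_iu$ in $G^\times$ has $c$ as its sole fake vertex with no further case analysis needed. With these two fixes your proof collapses to the paper's three-line argument.
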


\begin{proof}
By Lemma \ref{prop:4connectedlocal}, $xy$ does not cross any edge in $S$. Thus we may assume that  $\{v_1,\ldots, v_s\}\subseteq (uxyu)_{int}$.   Now for some $v_i$ in $\{v_1,\ldots, v_s\}$ and $z(\ne u)$, suppose that   $xy$ crosses $v_iz$  at a crossing $c$.     Since $|S| \ge 2$ and by 1-planarity, there exists a vertex from $\{v_1, v_2, \ldots, v_s\}$, different from $v_i$, that lies in either $(uxcv_i u)_{int}$ or $(uycv_i u)_{int}$. Therefore  $uxcv_iu$ or $uycv_iu$ is a separating  type-I 4-cycle in $G^\times$, a contradiction to  Lemma \ref{lem:4_connected_abxa}(ii). Thus the proposition  holds.
\end{proof}

\begin{figure}
\centering
\begin{tikzpicture}[scale=0.45]
	\begin{pgfonlayer}{nodelayer}
		\node [style=whitenode] (0) at (-5, 0) {$u$};
		\node [style=whitenode] (1) at (-8, 3) {$x$};
		\node [style=whitenode] (2) at (-2, 3) {$y$};
		\node [style=whitenode] (3) at (-5, 4.25) {$v_i$};
		\node [style=none] (4) at (-5.5, 3.4) {$c$};
		\node [style=whitenode] (5) at (4, 0) {$u$};
		\node [style=whitenode] (6) at (1, 3) {$x$};
		\node [style=whitenode] (7) at (7, 3) {$y$};
		\node [style=whitenode] (8) at (3.5, 2) {$v_i$};
		\node [style=none] (9) at (3.25, 3.5) {$c$};
		\node [style=whitenode] (10) at (2.5, 4) {$z$};
        \node  at (-5, -2) {(a)};
        \node  at (4, -2) {(b)};
	\end{pgfonlayer}
	\begin{pgfonlayer}{edgelayer}
		\draw [blackedge](1) to (0);
		\draw [blackedge](0) to (2);
		\draw [blackedge](1) to (2);
		\draw [blackedge](3) to (0);
		\draw [blackedge](6) to (5);
		\draw [blackedge](5) to (7);
		\draw [blackedge](6) to (7);
		\draw [blackedge](8) to (5);
		\draw [blackedge](10.center) to (8);
	\end{pgfonlayer}
\end{tikzpicture}
\caption{Schematic diagrams for the proofs of Propositions  \ref{prop:4connectedlocal} and \ref{prop:6connectedlocal}}
\label{fig:Schematicdiagrams}
\end{figure}

\begin{pro}\label{prop:7connectedlocal}
Let $G$ be  a $7$-connected  $1$-plane graph.  Assume that $u$ is a vertex of $G$ and $\operatorname{rot}_G(u)=[u_1,u_2,\dots, u_k]$ where $k\ge 7$. Then the following statements hold.
\begin{itemize}
  \item [(i)]  If $3\le |i- j|\le k-3$, then $u_i\not\sim u_j$; and
  \item [(ii)] if $u_i\sim u_{i+2}$ where $u_{k+1}=u_1$ and $u_{k+2}=u_2$, then $u_iu_{i+2}$ crosses $uu_{i+1}$. 
\end{itemize}
\end{pro}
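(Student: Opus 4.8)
The plan is to reduce both parts to the following fact about a $7$-connected $1$-plane graph $G$: \emph{no $3$-cycle of $G$ is separating}. Here a $3$-cycle $C=xyz$ is viewed as the closed curve formed by the arcs of the edges $xy,yz,zx$; any two of these edges share an endvertex and hence do not cross, so $C$ is a Jordan curve, and ``separating'' means that both of the regions it bounds contain a vertex of $G$. To prove it, suppose $C$ separates a vertex $a_1$ from a vertex $a_2$, so $a_1,a_2\notin\{x,y,z\}$. By $1$-planarity each of $xy,yz,zx$ is crossed at most once, and an edge crossing one of them cannot cross another; hence at most three edges of $G$, say $f_1,f_2,f_3$ (some possibly absent), cross $C$. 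Since $\kappa(G)=7$ and $|V(G)|\ge 8$, Menger's theorem (see~\cite{Bondy}) yields seven internally disjoint $a_1a_2$-paths. Each of them is a curve joining the two sides of $C$, so it meets $C$, and the only ways a path in $G$ can meet $C$ are to pass through a vertex of $\{x,y,z\}$ or to traverse one of $f_1,f_2,f_3$ (if it runs along an edge of $C$ itself, it passes through a vertex of $\{x,y,z\}$). Choosing such a ``device'' for each of the seven paths gives an injection: a vertex of $\{x,y,z\}$ is internal to at most one path (it is not $a_1$ or $a_2$), and each $f_t$ is traversed by at most one path, since two such paths share both endvertices of $f_t$, forcing $f_t=a_1a_2$, and then only the single-edge path traverses $a_1a_2$. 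Thus $7\le 3+3$, a contradiction. (When all edges of $C$ are uncrossed this is exactly Lemma~\ref{lem:4_connected_abxa}(iii).)

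Given this fact, in each part I would assume the conclusion fails and exhibit a separating $3$-cycle through $u$. For (i), assume $u_i\sim u_j$ with $3\le|i-j|\le k-3$ -- equivalently, each of the two arcs of the cyclic list $u_1,\dots,u_k$ delimited by $u_i$ and $u_j$ contains at least two of the $u_m$ -- and set $\ell=uu_iu_ju$. Near $u$ the two edges $uu_i,uu_j$ of $\ell$ split a neighbourhood of $u$ into two sectors, so the edges $uu_m$ with $m$ on one arc leave $u$ into one side of $\ell$ and those on the other arc leave $u$ into the other side. Such an edge $uu_m$ cannot cross $uu_i$ or $uu_j$ (shared vertex $u$), so it crosses $\ell$ only if it crosses $u_iu_j$, and $u_iu_j$ is crossed at most once; hence at most one edge $uu_m$ crosses $\ell$ altogether, and since each arc has at least two of the $u_m$, both sides of $\ell$ contain a vertex $u_m$. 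So $\ell$ is separating, contradicting the fact. For (ii), assume $u_i\sim u_{i+2}$ while $u_iu_{i+2}$ does not cross $uu_{i+1}$, and set $\ell=uu_iu_{i+2}u$. The edge $uu_{i+1}$ crosses none of the edges of $\ell$ (it cannot cross $uu_i$ or $uu_{i+2}$, and by assumption it does not cross $u_iu_{i+2}$), so $u_{i+1}$ lies strictly on the side of $\ell$ that $uu_{i+1}$ enters at $u$; among the $k-3$ edges $uu_{i+3},\dots,uu_{i-1}$ on the other arc, at most one crosses $u_iu_{i+2}$, so at least $k-4\ge 3$ of the vertices $u_{i+3},\dots,u_{i-1}$ lie strictly on the opposite side (recall $k\ge 7$). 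Hence $\ell$ separates $u_{i+1}$ from those vertices, a contradiction; therefore $u_iu_{i+2}$ crosses $uu_{i+1}$.

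The part I expect to be the main obstacle is the careful topological bookkeeping in the proof of the fact: verifying that the three edge-arcs really form a simple closed curve, that ``a path meets $C$'' decomposes exactly into the at most six devices listed (including the degenerate case of a path running along an edge of $C$), and that the device-to-path assignment remains injective even when $a_1$ or $a_2$ happens to be an endvertex of one of the crossing edges $f_t$. Once that is in place, the identification of the separating $3$-cycle in parts (i) and (ii) from the rotation at $u$, using only that each triangle edge is crossed at most once, is routine.
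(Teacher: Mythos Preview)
Your argument is correct, and the route is close in spirit to the paper's but organised around a genuinely stronger auxiliary fact. The paper never proves that an arbitrary $3$-cycle of $G$ (with possibly crossed edges) is non-separating; instead it works in the associated plane graph $G^\times$ and invokes Lemma~\ref{lem:4_connected_abxa}, which only treats $3$-cycles of $G^\times$. Consequently, in both parts the paper must split into cases according to whether and how the third edge ($u_iu_j$ in (i), $u_iu_{i+2}$ in (ii)) is crossed: when it is uncrossed the triangle is already a $3$-cycle of $G^\times$ and Lemma~\ref{lem:4_connected_abxa}(iii) applies; when it is crossed at $c$, the paper passes to a smaller $3$-cycle in $G^\times$ through the fake vertex $c$ and applies Lemma~\ref{lem:4_connected_abxa}(i) or (iii). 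Your ``six-device'' Menger count absorbs this case analysis in one stroke, so (i) and (ii) become immediate without ever mentioning $G^\times$. The price is the extra care you flag in the bookkeeping (injectivity when an endvertex of some $f_t$ is $a_1$ or $a_2$, or lies in $\{x,y,z\}$); the benefit is a single uniform lemma that cleanly covers all cases, including the one the paper's write-up handles a bit loosely (namely when $u_iu_j$ is crossed by an edge outside $S$).
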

\begin{proof}
(i). Suppose that $u_i\sim u_j$.  Since $3 \le |i-j| \le k-3$, there are at least two edges lying between $uu_i$ and $uu_j$ in the clockwise direction, namely $uu_x$ and $uu_y$, and at least two edges lying between them in the counterclockwise direction, namely $uu_{x'}$ and $uu_{y'}$, as illustrated in Fig.~\ref{fig:schematic}(a).
 Let $S=\{uu_x,uu_y,uu_{x'},uu_{y'}\}$,  By the 1 -planarity of $G$, we consider the following two cases on the edge $u_i u_j$.
(1). $u_i u_j$ does not cross any edge in $S$. In this case, the vertices $u_x$ and $u_{x^{\prime}}$ are separated by the cycle $u u_i u_j u$, which contradicts Lemma \ref{lem:4_connected_abxa}(iii).
(2). $u_i u_j$ crosses an edge in $S$. We may assume that $u_i u_j$  crosses $u u_x$ at crossing $c$ shown in Fig. \ref{fig:schematic}(b), then  $u_y$ and $u_{x^{\prime}}$ are separated by the 3-cycle $uc u_j u$ in $G^\times$, contradicting Lemma \ref{lem:4_connected_abxa}(i).
Thus, (i) holds.

(ii). Suppose that $u_iu_{i+2}$ does not cross $uu_{i+1}$. If $u_iu_{i+2}$ is uncrossed, 
then $uu_iu_{i+2}u$ is a separating in $G$ since it separates $u_{i+1}$ and any vertex of $N_G(u)\setminus \{u_i,u_{i+1}, u_{i+2}\}$, which contradicts Lemma \ref{lem:4_connected_abxa}(iii). If $u_iu_{i+2}$ is crossed with an edge $st\ne uu_{i+1}$, we may assume that  where $s\ne u$ and $s\in (uu_iu_{i+2}u)_{int}$ and $t\in (uu_iu_{i+2}u)_{out}$. Then $uu_iu_{i+1}u$ is a separating 3-cycle in $G^\times$ which separates $u_{i+1}$ and $t$, which contradicts Lemma \ref{lem:4_connected_abxa}(iii).  Thus, (ii) holds.
\end{proof}

\begin{figure}[H]
\centering
\begin{tikzpicture}[scale=0.8]
	\begin{pgfonlayer}{nodelayer}
		\node [style=whitenode] (0) at (-3, 0) {$u$};
		\node [style=whitenode] (1) at (-3, 2) {$u_i$};
		\node [style=whitenode] (2) at (-3, -2) {$u_j$};
		\node [style=whitenode] (3) at (-1, 0.5) {$u_x$};
		\node [style=whitenode] (4) at (-1, -0.75) {$u_y$};
		\node [style=whitenode] (5) at (-5, 0.75) {$u_{x'}$};
		\node [style=whitenode] (6) at (-5, -0.75) {$u_{y'}$};
		\node [style=whitenode] (7) at (3, 0) {$u$};
		\node [style=whitenode] (8) at (3, 2) {$u_i$};
		\node [style=whitenode] (9) at (3, -2) {$u_j$};
		\node [style=whitenode] (10) at (5, 0.5) {$u_x$};
		\node [style=whitenode] (11) at (5, -0.75) {$u_{y}$};
		\node [style=whitenode] (12) at (1, 0.75) {$u_{x'}$};
		\node [style=whitenode] (13) at (1, -0.75) {$u_{y'}$};
		\node [style=none] (14) at (4.5, 0) {};
	\node [style=none] (c) at (4, 0) {$c$};
		\node [style=none] (15) at (-3, -3) {(a)};
		\node [style=none] (16) at (3, -3) {(b)};
	\end{pgfonlayer}
	\begin{pgfonlayer}{edgelayer}
		\draw [style=blackedge] (1) to (0);
		\draw [style=blackedge] (0) to (2);
		\draw [style=blackedge] (0) to (3);
		\draw [style=blackedge] (0) to (4);
		\draw [style=blackedge] (5) to (0);
		\draw [style=blackedge] (0) to (6);
		\draw [style=blackedge] (8) to (7);
		\draw [style=blackedge] (7) to (9);
		\draw [style=blackedge] (7) to (10);
		\draw [style=blackedge] (7) to (11);
		\draw [style=blackedge] (12) to (7);
		\draw [style=blackedge] (7) to (13);
		\draw [style=blackedge] (8) to (9);
		\draw [style=blackedge, in=165, out=-60] (8) to (14.center);
		\draw [style=blackedge, in=0, out=-15, looseness=2.25] (14.center) to (9);
	\end{pgfonlayer}
\end{tikzpicture}

\caption{A schematic diagram for the proof of Proposition \ref{prop:7connectedlocal} (i)}
\label{fig:schematic}
\end{figure}

\section{Proofs of main theorems  }\label{sec:thmproof}

%

\subsection{\textbf{Proof of Theorem \ref{main0}}}

%

Suppose to the contrary that there exists a vertex $v$ with $d_G(v)=k \ge 11$.  First, we assert that$\overline{G[N_G(v)]}$ is nonbipartite.
Suppose  that  $\overline{G[N_G(v)]}$  is bipartite.  Let the partite sets of $\overline{G[N_G(v)]}$ be $X$ and $Y$. Since $d_G(v)\ge 11$,  we have $|X| \ge 6$ or $|Y| \ge  6$. Then $G[X\cup \{v\}]$ or $G[Y\cup \{v\}]$ contains a subgraph isomorphic to the complete graph $K_7$ in $G$, a contradiction to the fact that $K_7$ is not 1-planar \cite{Czap2012}.  Thus $\overline{G[N_G(v)]}$ is  not bipartite.      Clearly, $\overline{G[N_G(v)]}$ is triangle-free.     Furthermore, by Lemma \ref{erdos}, we have 
\begin{align}\label{La}
e(\overline{G[N_G(v)]})\le \frac{1}{4}(k-1)^2+1.
\end{align}
On the other hand, the number of edges in \( G[N_G[v]] \) is given by
\begin{align}\label{Lb}
e(G[N_G[v]])= k+ \big(\binom{k}{2}-  e(\overline{G[N_G(v)]})\big)
\end{align}
Combining inequalities \ (\ref{La}) with  (\ref{Lb}),  we have 
\begin{align}\label{l1}
e(G[N_G[v]]) \ge   k+ \binom{k}{2}- (\frac{1}{4}(k-1)^2+1).
\end{align}
Since $d_{G[N_G[v]]}(v)=|N_G[v]|-1$ and clearly $|N_G[v]|\ge 12$, by Lemma \ref{ed_up},  we have
\begin{align}\label{l2}
e(G[N_G[v]])\le 4(k+1)-9.
\end{align}
Combining Inequalities (\ref{l1}) with (\ref{l2}),  we get $$k\le \lfloor 6+\sqrt{21} \rfloor =10,$$ which contradicts the assumption  $k\ge 11$.
Thus we have $\Delta(G)\le 10$.

Next, we construct a class of infinitely many claw-free 1-planar graphs with  maximum degree 10.
Let $G_1$ be the graph shown in Fig. \ref{fig:maximumdegree10}(i).
Furthermore, let $G_k$ be the graph obtained from $G_1$ and $P_k$ ($k \ge 2$) by identifying a vertex $v\ne u$ of $G_1$ with a degree-1 vertex  of $P_k$, as shown in Fig.~\ref{fig:maximumdegree10}(ii). Clearly, $G_k$ is 1-planar and $\Delta(G_k)=d_{G_k}(u)=10$. It can be directly verified that $G_k$ is claw-free, as desired.

%



%

\begin{figure}[H]
\centering
\begin{tikzpicture}[scale=0.55]
	\begin{pgfonlayer}{nodelayer}
		\node [style=whitenode] (0) at (-11, 5) {};
		\node [style=whitenode] (1) at (-14, 0) {$v$};
		\node [style=whitenode] (2) at (-8, 0) {$u$};
		\node [style=whitenode] (3) at (-5, 5) {};
		\node [style=whitenode] (4) at (-2, 0) {};
		\node [style=whitenode] (5) at (-11, 3) {};
		\node [style=whitenode] (6) at (-12, 1) {};
		\node [style=whitenode] (7) at (-10, 1) {};
		\node [style=whitenode] (8) at (-5, 3) {};
		\node [style=whitenode] (9) at (-6, 1) {};
		\node [style=whitenode] (10) at (-4, 1) {};
		\node [style=whitenode] (11) at (5, 5) {};
		\node [style=whitenode] (12) at (2, 0) {$v$};
		\node [style=whitenode] (13) at (8, 0) {$u$};
		\node [style=whitenode] (14) at (11, 5) {};
		\node [style=whitenode] (15) at (14, 0) {};
		\node [style=whitenode] (16) at (5, 3) {};
		\node [style=whitenode] (17) at (4, 1) {};
		\node [style=whitenode] (18) at (6, 1) {};
		\node [style=whitenode] (19) at (11, 3) {};
		\node [style=whitenode] (20) at (10, 1) {};
		\node [style=whitenode] (21) at (12, 1) {};
		\node [style=whitenode] (22) at (2, 1) {};
		\node [style=whitenode] (23) at (2, 2) {};
		\node [style=whitenode] (24) at (2, 4) {};
		\node [style=whitenode] (25) at (2, 5) {};
		\node [style=none] (26) at (-9, -2) {(i)};
		\node [style=none] (27) at (8, -2) {(ii)};
		\node [style=none] (28) at (1, 2.5) {$P_k$};
	\end{pgfonlayer}
	\begin{pgfonlayer}{edgelayer}
		\draw [style=blackedge] (0) to (2);
		\draw [style=blackedge] (2) to (1);
		\draw [style=blackedge] (1) to (0);
		\draw [style=blackedge] (3) to (2);
		\draw [style=blackedge] (2) to (4);
		\draw [style=blackedge] (4) to (3);
		\draw [style=blackedge] (3) to (8);
		\draw [style=blackedge] (8) to (9);
		\draw [style=blackedge] (9) to (10);
		\draw [style=blackedge] (10) to (8);
		\draw [style=blackedge] (9) to (2);
		\draw [style=blackedge] (10) to (4);
		\draw [style=blackedge] (0) to (5);
		\draw [style=blackedge] (5) to (6);
		\draw [style=blackedge] (6) to (7);
		\draw [style=blackedge] (7) to (5);
		\draw [style=blackedge] (6) to (1);
		\draw [style=blackedge] (2) to (7);
		\draw [style=blueedge] (5) to (1);
		\draw [style=blueedge] (6) to (0);
		\draw [style=blueedge] (0) to (7);
		\draw [style=blueedge] (5) to (2);
		\draw [style=blueedge] (2) to (6);
		\draw [style=blueedge] (7) to (1);
		\draw [style=blueedge] (2) to (10);
		\draw [style=blueedge] (4) to (9);
		\draw [style=blueedge] (9) to (3);
		\draw [style=blueedge] (8) to (2);
		\draw [style=blueedge] (10) to (3);
		\draw [style=blueedge] (8) to (4);
		\draw [style=blackedge] (11) to (13);
		\draw [style=blackedge] (13) to (12);
		\draw [style=blackedge] (12) to (11);
		\draw [style=blackedge] (14) to (13);
		\draw [style=blackedge] (13) to (15);
		\draw [style=blackedge] (15) to (14);
		\draw [style=blackedge] (14) to (19);
		\draw [style=blackedge] (19) to (20);
		\draw [style=blackedge] (20) to (21);
		\draw [style=blackedge] (21) to (19);
		\draw [style=blackedge] (20) to (13);
		\draw [style=blackedge] (21) to (15);
		\draw [style=blackedge] (11) to (16);
		\draw [style=blackedge] (16) to (17);
		\draw [style=blackedge] (17) to (18);
		\draw [style=blackedge] (18) to (16);
		\draw [style=blackedge] (17) to (12);
		\draw [style=blackedge] (13) to (18);
		\draw [style=blueedge] (16) to (12);
		\draw [style=blueedge] (17) to (11);
		\draw [style=blueedge] (11) to (18);
		\draw [style=blueedge] (16) to (13);
		\draw [style=blueedge] (13) to (17);
		\draw [style=blueedge] (18) to (12);
		\draw [style=blueedge] (13) to (21);
		\draw [style=blueedge] (15) to (20);
		\draw [style=blueedge] (20) to (14);
		\draw [style=blueedge] (19) to (13);
		\draw [style=blueedge] (21) to (14);
		\draw [style=blueedge] (19) to (15);
		\draw [style=blackedge] (22) to (12);
		\draw [style=blackedge] (23) to (22);
		\draw [style=blackedge] (25) to (24);
		\draw [style=dashed] (24) to (23);
	\end{pgfonlayer}
\end{tikzpicture}

\caption{Claw-free 1-planar graphs with maximum degree 10}
\label{fig:maximumdegree10}
\end{figure}

\subsection{\textbf{Proof of Theorem \ref{thm:main_6connected}}}\label{sec:proofofthmmain_6connected}

Suppose that $u$ is a vertex of $G$ with $d_G(u)\ge 9$. Let $D$ be a 1-planar drawing of $G$. We may assume that $N_G(u)=\{v_1,v_2,\ldots,v_{9}\}$ with $\operatorname{rot}_D(u)=[v_1,v_2,v_3,\ldots, v_9] $.
By the claw-freedom of  $G$ and noting the claw $\gamma_G(u,v_1v_4v_7)$, we have either $v_1\sim v_4$, $v_4\sim u_7$ or $v_1\sim v_7$.
By symmetry, we only consider $v_1\sim v_4$. By Proposition   \ref{prop:4connectedlocal}, we may assume that $v_2,v_3 \in (uv_1v_4u)_{int}$.
 By Proposition~\ref{prop:6connectedlocal}, we have $v_2 \nsim v_5$ and $v_2 \nsim v_8$. Together with the fact that $\gamma_G\left(u, v_2 v_5 v_8\right)$ is not induced, it follows that $v_5 \sim v_8$.
 Similarly, by Proposition   \ref{prop:4connectedlocal}, we may assume that  $v_6, v_7\in (uv_5v_8u)_{int}$. Note that $v_9\in (uv_1v_4u)_{out}$ and  $v_9\in (uv_5v_8u)_{out}$. By Proposition~\ref{prop:6connectedlocal}, we have $v_2\nsim v_6$ and $v_2\nsim v_9$, and $v_6\nsim v_9$. Then $\gamma_G\left(u, v_2 v_6 v_9\right)$  is induced, a contradiction. Thus, the theorem holds.

\subsection{\textbf{Proof of Theorem \ref{thm:connectivity}}}

Suppose that $\kappa(G)\ge 7$. It is known that $G$ has the minimum degree 7 \cite{Fabrici}. Then we may assume that $d_G(u)=7$ where $u\in V(G)$. Let $D$ be a 1-planar drawing of $G$. 
We may assume that $\operatorname{rot}_{D}(u)=[u_1, u_2, \ldots, u_{7}]$. By Proposition \ref{prop:7connectedlocal}(i), $u_1\nsim u_5$.  Since $\gamma_G(u,u_1u_3u_5)$ is a claw, by the claw-freedom of $G$, we have  $u_1\sim u_3$ or $u_3\sim u_5$. By symmetry, we may assume $u_1\sim u_3$ and by Proposition  \ref{prop:7connectedlocal}(ii), $u_1u_3$ crosses $uu_2$. Now by Proposition \ref{prop:7connectedlocal}(i), $u_4\nsim u_7$. By Proposition \ref{prop:7connectedlocal}(ii), if $u_2\sim u_7$, then it must cross $uu_1$, but this contradicts the 1-planarity of $D$. Therefore, $u_2\nsim u_7$.  Similarly, $u_2\nsim u_4$.
Thus $\gamma_G(u,u_2u_4u_7)$ is induced, a contradiction. Thus the theorem  holds.

%

%

\section{Concluding remarks}\label{sec:conclusion}
In this paper, we  give upper bounds on the maximum degree and connectivity of claw-free 1-planar graphs.  In the following, we present several open problems that remain unsolved.
Figure~\ref{f1} shows a 3-connected claw-free 1-planar graph with maximum degree 10. Taking a copy of $H_0$ we can obtain a new one by identifying the three vertices $x',y',z'$ in the copy with vertices $x,y,z$ in  $H_0$. Continuing this process, it is not difficult to construct infinitely many 3-connected claw-free 1-planar graphs with maximum degree 10. However, we have not yet found a 4-connected claw-free 1-planar graph with maximum degree  10.  We even conjecture that 
\emph{every 4-connected claw-free 1-planar graph has the maximum degree  at most 8.}  Note that if the conjecture holds, the upper bound 8 cannot be further improved, as we can construct a 4-connected claw-free 1-planar graph with maximum degree 8 shown in Fig.~\ref{f1}(ii). Based on this small graph, it is not difficult to construct infinitely many such graphs.

\begin{figure}[H]
\centering

\begin{tikzpicture}[scale=0.6, 
    every node/.style={draw, shape=circle, fill=gray!10, minimum size=0.2cm, inner sep=0pt},
    every edge/.style={-, draw=black, line width=1pt},
    blueedge/.style={draw=blue, line width=1pt}
]

\node (yp) at (210:3.3) {\tiny $y'$};
\node (zp) at (330:3.3) {\tiny $z'$};
\node (xp) at (90:3.1) {\tiny$x'$};
\node (sp) at (210:2.25) { \tiny$s'$};
\node (tp) at (330:2.25) {\tiny $t'$};
\node (u)  at (90:2.25) {\tiny $u$};
\node (s)  at (210:1.5) {\tiny $s$};
\node (t)  at (330:1.5) {\tiny $t$};
\node (y)  at (210:0.75) {\tiny $y$};
\node (z)  at (330:0.75) {\tiny $z$};
\node (x)  at (90:0.75)  {\tiny$x$};

\draw (xp) edge (u);
\draw (sp) edge (u);
\draw (sp) edge (tp);
\draw (tp) edge (u);
\draw (xp) edge (yp);
\draw (yp) edge (zp);
\draw (zp) edge (xp);
\draw (sp) edge (yp);
\draw (tp) edge (zp);
\draw (u)  edge (x);
\draw (x)  edge (y);
\draw (y)  edge (z);
\draw (z)  edge (x);
\draw (u)  edge (s);
\draw (y)  edge (s);
\draw (s)  edge (t);
\draw (u)  edge (t);
\draw (t)  edge (z);
\draw (s)  edge (sp);
\draw (t)  edge (tp);

\draw[blueedge] (xp) -- (sp);
\draw[blueedge] (yp) -- (u);
\draw[blueedge] (u)  -- (y);
\draw[blueedge] (s)  -- (x);
\draw[blueedge] (t)  -- (x);
\draw[blueedge] (u)  -- (z);
\draw[blueedge] (u)  -- (zp);
\draw[blueedge] (tp) -- (xp);
\draw[blueedge] (s)  -- (z);
\draw[blueedge] (y)  -- (t);
\draw[blueedge] (sp) -- (zp);
\draw[blueedge] (tp) -- (yp);
\end{tikzpicture}
\quad \quad
\begin{tikzpicture}[yshift=2cm, scale=0.28, 
    every edge/.style={-, draw=black, line width=1pt}]
 
	\begin{pgfonlayer}{nodelayer}
		\node [style=whitenode] (0) at (0, 11.5) {};
		\node [style=whitenode] (1) at (-8, -2) {};
		\node [style=whitenode] (2) at (8.25, -2) {};
		\node [style=whitenode] (3) at (-5, 0) {};
		\node [style=whitenode] (4) at (5.5, 0) {};
		\node [style=whitenode] (5) at (0, 7.5) {};
		\node [style=whitenode] (6) at (2.25, 1.75) {};
		\node [style=whitenode] (7) at (-2, 1.75) {};
		\node [style=whitenode] (8) at (0, 4) {};
        \node []  (a) at (0, -2.4) {};
	\end{pgfonlayer}
	\begin{pgfonlayer}{edgelayer}
		\draw (1) edge (0);
		\draw (0) edge (2);
		\draw (2) edge (1);
		\draw[blueedge] (0) to (3);
		\draw  (3) edge (4);
		\draw (4) edge (2);
		\draw (3) edge (1);
		\draw [blueedge] (1) to (4);
		\draw [blueedge] (3) to (2);
		\draw (0) edge (4);
		\draw (0) edge (5);
		\draw (5) edge (3);
		\draw [blueedge]  (5) to (4);
		\draw[blueedge]  (5) to (1);
		\draw (5) edge (6);
		\draw [blueedge] (6) to (3);
		\draw (6) edge (4);
		\draw [blueedge] (6) to (0);
		\draw [blueedge] (5) to (7);
		\draw (7) edge (3);
		\draw (6) edge (7);
		\draw [blueedge]  (7) to (4);
		\draw (5) edge (8);
		\draw  (8) to  (7);
		\draw (8) edge (6);
		\draw [blueedge] (8) to (3);
	\end{pgfonlayer}
\end{tikzpicture}
\caption{(i) A  claw-free 1-planar graph $H_0$ with connectivity 3 and maximum degree 10; (ii) a 4-connected claw-free  1-planar graph with maximum degree 8}
\label{f1}
\end{figure}


\begin{thebibliography}{99}
\footnotesize

\bibitem{Beineke}
L.W. Beineke, Characterizations of derived graphs, \emph{J. Combin. Theory} {\bf 9} (1970), 129--135.

\bibitem{Bekos}
M.A. Bekos, P. Bose, A. Büngener, V. Dujmovié, M. Hoffmann, M. Kaufmann, P. Morin, S. Odak,  A. Weinberger, On $k$-planar graphs without short cycles, arXiv:2408.16085, 2024. 
%





\bibitem{Bondy} J.A. Bondy, U.S.R. Murty, Graph Theory, GTM 244, Springer,
2008.


%


%


\bibitem{Chen}
 J. Chen, T. Wang,  H. Zhang, Acyclic chromatic index of triangle-free 1-planar graphs, \emph{Graphs and Combinatorics} \textbf{33} (2017), 859-868.

%
%
%
%
%
%


 
\bibitem{Czap2012}
 J. Czap, D. Hudák, 1-Planarity of complete multipartite graphs, \emph{Discrete Appl. Math.} \textbf{160} (2012) 505-512. 



%






\bibitem{Erdos}
 P. Erd\H{o}s, On a theorem of Rademacher-Turán, \emph{Illinois J. Math.} \textbf{6} (1962), 122-127.


\bibitem{Fabrici}
 I. Fabrici, T. Madaras, The structure of 1-planar graphs, \emph{Discrete Math.} \textbf{307} (2007), 854-865.

\bibitem{Faudree}
 R. Faudree, E. Flandrin, Z. Ryjáček, Claw-free graphs--a survey, \emph{Discrete Math.} \textbf{164} (1997), 87--147.


%
%



%
%


\bibitem{kobourov2017}
S.G. Kobourov, G.~Liotta, F.~Montecchiani, An annotated bibliography on
  1-planarity, \emph{Comput. Sci. Rev.} {\bf 25} (2017), 49--67.
%
%


%

\bibitem{Liang2022}
Z. Liang,  Total coloring of claw-free planar graphs, \emph{Discuss. Math. Graph Theory}, \textbf{42} (2022), 771-777.


\bibitem{Liang2016}
Z. Liang, E. Shan, L. Kang, Clique-perfectness of claw-free planar graphs, \emph{Graphs Combin.} \textbf{32} (2016), 2551--2562.



%


\bibitem{Matthews–Sumner}
 M. Matthews, D. Sumner, Hamiltonian results in $K_{1,3}$-free graphs, \emph{J. Graph Theory }\textbf{8} (1984), 139-146.
\bibitem{ouyang2019}
Z. Ouyang, J. Ge, Y. Chen, Remarks on the joins of 1-planar graphs, \emph{Appl. Math. Comput.} {\bf 362} (2019), 124537.

\bibitem{Plummer}
M.D. Plummer, Claw-free maximal planar graphs, \emph{Tech. rep., DTIC Document}, 1989, 9--23.

\bibitem{Plummer1994}
M.D. Plummer, Extending matchings in claw-free graphs, \emph{Discrete Math.} \textbf{125 }(1994), 301–
307

%
\bibitem{Pupyrev}
S. Pupyrev, Optimized One-Planarity Solver via SAT, 
first announced at the 33rd International Symposium on Graph Drawing and Network Visualization (GD2025), 
Session~04-02, 2025. Available at: \url{https://graphdrawing.github.io/gd2025/assets/pdfs/Session04-02-Sergey_Pupyrev.pdf}.





\bibitem{Ringel1965}
 G. Ringel, Ein Sechsfarbenproblem auf der Kugel, \emph{Abh. Math. Semin. Univ.
 Hambg.} {\bf 29} (1965), 107--117.

\bibitem{Shan2014}
E. Shan, Z. Liang and L. Kang, Clique-transversal sets and clique-coloring in planar graphs, \emph{European J. Combin.} {\bf 36} (2014), 367--376.



%




%

%







\end{thebibliography}
\end{document}